\def\ZZ{{\mathbb Z}}
\def\RR{{\mathbb R}}
\def\QQ{{\mathbb Q}}
\def\HH{{\mathbb H}}
\def\EE{{\mathbb E}}
\def\rank{\operatorname{rank}}
\def\cone{\operatorname{cone}}
\def\conv{\operatorname{conv}}
\def\aff{\operatorname{aff}}
\def\gp{\operatorname{gp}}
\def\hht{\operatorname{ht}}
\def\Ker{\operatorname{Ker}}
\def\depth{\operatorname{depth}}
\def\Im{\operatorname{Im}}
\def\cE{\mathcal E}
\let\dirsum=\oplus
\let\iso=\cong
\let\phi=\varphi
\newtheorem{theorem}{Theorem}[section]
\newtheorem{lemma}[theorem]{Lemma}
\newtheorem{corollary}[theorem]{Corollary}
\theoremstyle{definition}
\newtheorem{remark}[theorem]{Remark}
\newtheorem{example}[theorem]{Example}
\numberwithin{equation}{section}
\author{Winfried Bruns}
\address{Universität Osnabrück, Institut für Mathematik, 49069 Osnabrück, Germany}
\email{wbruns@uos.de}
\title{Binomial regular sequences and free sums}
\begin{document}

\begin{abstract}
Recently several authors have proved results on Ehrhart series
of free sums of rational polytopes. In this note we treat these
results from an algebraic viewpoint. Instead of attacking
combinatorial statements directly, we derive them from
structural results on affine monoids and their algebras that
allow conclusions for Hilbert and Ehrhart series. We
characterize when a binomial regular sequence generates a prime
ideal or even normality is preserved for the residue class
ring.
\end{abstract}

\maketitle

\section{Introduction}

Recently several authors have proved results on Ehrhart series
of free sums of rational polytopes; see Beck and Ho\c sten
\cite{BeH}, Braun \cite{Bra} and Beck, Jayawant, and McAllister
\cite{Be3}. In this note we treat these results from an
algebraic viewpoint. Instead of attacking combinatorial
statements directly, we derive them from structural results on
affine monoids and their algebras that allow conclusions for
Hilbert and Ehrhart series. This procedure follows the spirit
of the monograph \cite{BG} to which the reader is referred for
affine monoids and their algebras.

Our approach is best explained by the motivating example,
namely free sums of rational polytopes and their Ehrhart
series. The \emph{Ehrhart series} of a rational polytope $P$ is
the (formal) power series $E_P=\sum_{k=0}^\infty E(P,k)t^k$
where $E(P,k)$ counts the lattice points in the homothetic
multiple $kP$; see Beck and Robbins \cite{BeRo} for a gentle
introduction to the fascinating area of Ehrhart series.

One says that $R=\conv(P\cup Q)$ is the \emph{free sum} of the
rational polytopes $P$ and $Q$ if $0\in P\cap Q$, the vector
subspaces $\RR P$ and $\RR Q$ intersect only in $0$, and
$$
(\ZZ^m\cap \RR R)=(\ZZ^m\cap\RR P)+(\ZZ^m\cap\RR Q).
$$
It has been proved in \cite[Theorem 1.4]{Be3} that the Ehrhart
series of the three polytopes are related by the equation
\begin{equation*}
E_R=(1-T)E_P E_Q\eqno{(*)}
\end{equation*}
if and only if at least one of the polytopes $P$ and $Q$ is
described by inequalities of type $a_1x_1+\dots+a_mx_m\le b$
with $a_1,\dots,a_n\in \ZZ$ and $b\in\{0,1\}$.

We approach the validity of equation $(*)$ by considering the
\emph{Ehrhart monoid}
$$
\cE(P)=\{(x,k):x\in kP\cap \ZZ^m\}=\RR_+(P\times\{1\})\cap \ZZ^{m+1}.
$$
The Ehrhart series is the Hilbert series of $\cE(P)$ or,
equivalently, of the monoid algebra $K[\cE(P)]$ over a field
$K$, and therefore standard techniques for computing Hilbert
series can be applied. Ehrhart monoids are normal: if
$nx\in\cE(P)$ for some $x$ in the group $\ZZ \cE(P)$ and
$n\in\ZZ_+$, $n>0$, then $x\in\cE(P)$. The normality of a
monoid $M$ is equivalent to the normality of $K[M]$.

The free sum arises from the free join by a projection along
the line through the representatives of the origins in $P$ and
$Q$, respectively, in the free join. The algebraic counterpart
of the projection is the passage from the direct sum
$\cE(P)\dirsum \cE(Q)$ to a quotient $M$. By Corollary
\ref{dirsum}, $M$ is automatically an affine monoid in this
situation. However, the crucial question is whether $M$ is
naturally isomorphic to $\cE(R)$, and this is the case if and
only if $M$ is normal. In terms of monoid algebras, the
quotient is given by residue classes modulo a binomial.
Therefore the validity of $(*)$ can be seen as a special case
of the preservation of normality modulo a binomial in a normal
monoid algebra, for which Theorem \ref{normal} provides a
necessary and sufficient condition.

In \cite[Corollary 5.8]{Be3} the intersection of $\RR P$ and
$\RR Q$ in $0$ has been generalized to the intersection of the
affine hulls $\aff(P)$ and $\aff(Q)$ in a single rational point
$z\in P\cap Q$, and the corresponding generalization of $(*)$
follows by entirely the same argument (Corollary
\ref{rational_EE}).

Our discussion above shows that it is worthwhile to
characterize when a binomial (or more generally a regular
sequence of binomials) in an affine monoid domain generates a
prime ideal (Theorem \ref{prime} and Corollary
\ref{prime_mult}), or when even normality is preserved modulo
such a binomial (Theorem \ref{normal} and Corollary
\ref{norm_mult}). Also the main reduction step in Bruns and
Römer \cite{BR} is of this type.

It would be possible to mold the results of this note in the
language of monoids and congruences, but the ring-theoretic
environment is much richer in notions and methods, and results
like Hochster's theorem on the Cohen-Macaulay property of
normal affine monoid domains could hardly be formulated in pure
monoid theory.

This work was initiated by discussions with Serkan Ho\c{s}ten
about \cite{BeH} and then driven by the desire to prove the
results of \cite{Be3} and \cite{Bra} in an algebraic way. We
are grateful to Matthias Beck for directing our attention to
these papers, and we thank Benjamin Braun, Serkan Ho\c{s}ten,
Tyrrell McAllister and Matteo Varbaro for their careful reading
of a preliminary version and valuable suggestions.

\section{Integrality}\label{PRIME}

An \emph{affine monoid} is a finitely generated submonoid of a
group $\ZZ^m$. It is \emph{positive} if $x,-x\in M$ implies
$x=0$. For a field $K$ the monoid algebra $K[M]$ is a finitely
generated $K$-subalgebra of the Laurent polynomial ring
$K[\ZZ^m]$. We write $X^x$ for the (Laurent) monomial with
exponent vector $x$. Since the subgroup $\gp(M)$ of $\ZZ^m$
generated by $M$ is isomorphic to $\ZZ^d$ for $d=\rank M=\rank
\gp(M)$, the subalgebra $K[\gp(M)]\subset K[\ZZ^m]$ is a
Laurent polynomial ring in its own right . For an extensive
treatment of affine monoids and their algebras we refer the
reader to Bruns and Gubeladze \cite{BG}, in particular to
Chapter~4.

A \emph{(multi)grading} on a monoid $M$ is a $\ZZ$-linear map
$\deg:\gp(M)\to \ZZ^d$ for some $d>0$. If the Hilbert function
$H(M,g)=\#\{x\in M: \deg x=g\}$ is \emph{finite} for all $g$,
we can define the Hilbert series
$$
H_M(T)=\sum_{g\in \ZZ^d} H(M,g)T^g
$$
where $T$ stands for indeterminates $T_1,\dots,T_d$ and
$T^g=T_1^{g_1}\dots T_d^{g_d}$. See \cite[Ch. 6]{BG} for the
basic theorems on Hilbert series. A priori, $H_M(T)$ lives in
the $\ZZ[T_1,\dots,T_d]$-module
$\ZZ[\negthinspace[T_1,\dots,T_d]\negthinspace]$ of formal
Laurent series.

Every grading on $M$ is the specialization of the \emph{fine
grading} in which $\deg$ is simply the given embedding
$\gp(M)\hookrightarrow \ZZ^m$. We denote the Hilbert series of
the fine grading by $\HH_M$. Since $M$ can be recovered from
$\HH_M$, it is justified to call it the generating function of
$M$.

We say that $M$ is \emph{positively (multi)graded} if $\deg(M)$
is a positive submonoid of $\ZZ^d$ and the elements of
$K\subset K[M]$ are the only ones of degree $0$. This implies
the finiteness of the Hilbert function. By the classical
theorem of Hilbert-Serre, $H_M(T)$ is the Laurent series
expansion of a rational function (with respect to the positive
submonoid $\deg(M)$).

A few more pieces of terminology and notation: we say that a
nonzero $x\in \ZZ^n$ is \emph{unimodular} if $x$ generates a
direct summand. The cone generated by $A\subset \RR^n$ is
denoted by $\cone(A)$, and $\aff(A)$ is the affine subspace
spanned by $A$.

For the basic theory of zerodivisors, $R$-sequences and depth
in Noetherian rings we refer the reader to Bruns and Herzog
\cite{BH}.

\begin{theorem}\label{prime}
Let $K$ be a field, $M$ an affine monoid, and $x,y\in M$
noninvertible, $x\neq y$. Then the following statements (1) and
(2) are equivalent:
\begin{enumerate}
\item $X^x-X^y$ generates a prime ideal in $K[M]$.
\item
\begin{enumerate}
\item $X^x,X^y$ is a $K[M]$-sequence;
\item $\gp(M)/\ZZ(x-y)$ is torsionfree.
\end{enumerate}
\end{enumerate}
Moreover, if $\phi:M\to M'$ is a surjective homomorphism onto
an affine monoid $M'$ with $\rank M'=\rank M-1$ and
$\phi(x)=\phi(y)$, then (1) and (2) are equivalent to
\begin{enumerate}
\item[(3)] $K[M']=K[M]/(X^x-X^y)$ under the induced
    homomorphism.
\end{enumerate}
Finally, if in this situation $M'$ is positively multigraded
and $\phi(z)\neq 0$ for all nonzero $z\in M$, then (1), (2),
and (3) are equivalent to
\begin{enumerate}
\item[(4)] $H_{M'}=(1-T^g)H_M$ with respect to the induced
    grading on $M$, $g=\deg \phi(x)$.
\end{enumerate}
\end{theorem}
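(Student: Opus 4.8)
The plan is to establish the equivalence $(3)\Leftrightarrow(4)$ and combine it with the already-proved equivalence of (1), (2), (3). First I would set up the grading carefully. The induced grading on $M$ is $\deg=\deg_{M'}\circ\gp(\phi)$, where $\gp(\phi)\colon\gp(M)\to\gp(M')$ extends $\phi$; thus $\deg(m)=\deg_{M'}(\phi(m))$ for $m\in M$, so the induced algebra surjection $\psi\colon K[M]\to K[M']$, $X^m\mapsto X^{\phi(m)}$, is homogeneous. Since $M'$ is positively multigraded, $0$ is its only element of degree $0$; as $\phi(z)\neq 0$ for every nonzero $z\in M$, it follows that $0$ is the only element of degree $0$ in $M$, and $\deg(M)\subseteq\deg_{M'}(M')$ is a submonoid of a positive submonoid, hence positive. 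So $M$ is positively multigraded, $H_M$ is well defined and has finite values in each degree. Moreover $\phi(x)=\phi(y)$ gives $\deg(x)=\deg(y)=g$, so $f:=X^x-X^y$ is homogeneous of degree $g$; since $K[M]$ is a domain (it sits inside the Laurent polynomial ring $K[\gp(M)]$) and $x\neq y$, $f$ is a nonzerodivisor, so multiplication by $f$ is an injective homogeneous map and yields an isomorphism of graded modules $K[M](-g)\cong(f)$ onto the principal ideal; in particular $H_{(f)}=T^gH_M$.

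For $(3)\Rightarrow(4)$: the short exact sequence of graded $K[M]$-modules
\[
0\longrightarrow(f)\longrightarrow K[M]\longrightarrow K[M]/(f)\longrightarrow 0
\]
gives $H_{K[M]/(f)}=H_M-H_{(f)}=(1-T^g)H_M$, and by (3) the homogeneous surjection $\psi$ induces a graded isomorphism $K[M]/(f)\cong K[M']$, so $H_{M'}=(1-T^g)H_M$, which is (4).

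For $(4)\Rightarrow(3)$: let $I=\ker\psi$, a graded ideal with $(f)\subseteq I$ and $K[M]/I\cong K[M']$ as graded algebras. Then $H_I=H_M-H_{M'}$, which by (4) equals $H_M-(1-T^g)H_M=T^gH_M=H_{(f)}$. Because $M$ is positively multigraded, every graded piece is finite-dimensional, so from $(f)\subseteq I$ together with $H_{(f)}=H_I$ we get $\dim_K(f)_h=\dim_K I_h$, hence $(f)_h=I_h$, in every multidegree $h$; therefore $(f)=I$ and $K[M]/(f)=K[M]/I\cong K[M']$, which is (3).

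The whole argument is bookkeeping with Hilbert series once the grading is in place; the one place where the additional hypotheses of this last part are needed — $M'$ positively multigraded and $\phi(z)\neq 0$ for nonzero $z$ — is in checking that the induced grading on $M$ is again positive, so that the Hilbert series exist and, in $(4)\Rightarrow(3)$, the comparison of coefficients that forces $(f)=I$ is legitimate. I do not expect any genuine obstacle beyond this verification.
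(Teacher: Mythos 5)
Your argument for $(3)\Leftrightarrow(4)$ is correct and in fact more carefully justified than the paper's one-line version: the verification that the induced grading on $M$ is positive (so that all graded pieces are finite-dimensional and the coefficientwise comparison in $(4)\Rightarrow(3)$ is legitimate), the short exact sequence giving $H_{K[M]/(f)}=(1-T^g)H_M$, and the containment-plus-equal-Hilbert-function argument forcing $\Ker\psi=(f)$ are all sound and match the intended reasoning.

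The problem is that this is only the last, and easiest, quarter of the theorem. You write that you will ``combine it with the already-proved equivalence of (1), (2), (3),'' but that equivalence is not already proved anywhere --- it is the main content of the statement you were asked to prove, and your proposal contains no argument for it. The substantive work is: for $(2)\Rightarrow(1)$, showing that condition (2)(a) forces every monomial to be a nonzerodivisor modulo $X^x-X^y$ (via the fact that associated primes of monomial ideals in $K[M]$ are monomial, so such a prime would have to contain both $X^x$ and $X^y$, contradicting regularity of the sequence locally), hence $(X^x-X^y)$ is the contraction of its extension to the Laurent ring $K[\gp(M)]$, where (2)(b) makes $X^{y-x}$ part of a basis and $1-X^{y-x}$ visibly prime; for $(1)\Rightarrow(2)$, reversing these arguments; and for the equivalence with (3), the dimension count showing $\Ker\phi$ is a height~$1$ prime containing the prime $(X^x-X^y)$, hence equal to it. None of this is ``bookkeeping with Hilbert series,'' and without it the proposal does not establish the theorem.
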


\begin{proof}
Let us start with the implication (2) $\implies$ (1). First we
prove that no monomial is a zerodivisor modulo $X^x-X^y$ if
(2)(a) holds. In fact, suppose that $X^z$ is such a
zerodivisor. Then it is contained in an associated prime ideal
$P$ of $X^x-X^y$. But $P$ is an associated prime ideal of any
nonzero element of $R=K[M]$ it contains (since $R$ is an
integral domain). Therefore $P$ is an associated prime ideal of
$X^z$ as well. Associated prime ideals of monomials are
generated by monomials \cite[4.9]{BG}, and so $P$ contains both
$X^x$ and $X^y$ together with $X^x-X^y$. This is a
contradiction since $X^x,X^y$ is a regular sequence in the
localization $R_P$.

It follows that $(X^x-X^y)$ is the contraction of its extension
to the Laurent polynomial ring $K[\gp(M)]$ (see
\cite[4.C]{BG}). So it is enough that $(X^x-X^y)K[\gp(M)]$ is a
prime ideal. This follows from (2)(b) since
$(X^x-X^y)K[\gp(M)]=(1-X^{y-x})K[\gp(M)]$ and $X^{y-x}$ is an
indeterminate in $K[\gp(M)]$ after a suitable choice of a basis
of $\gp(M)$.

For the converse we first derive (2)(a). If $(X^x-X^y)$ is a
prime ideal, then no monomial can be a zerodivisor modulo
$X^x-X^y$. On the other hand, if $X^y$ were a zerodivisor
modulo $X^x$, then it would be contained in an associated prime
ideal $P$ of $X^x$. But such $P$ is monomial and also an
associated prime ideal of $X^x-X^y$. Thus it would be equal to
$(X^x-X^y)$, which is not monomial.

(2)(b) follows from (1) since the primeness of the extension of
$(X^x-X^y)$ to $K[\gp(M)]$ evidently implies that
$\gp(M)/\ZZ(x-y)$ is torsionfree \cite[4.32]{BG}; also see
Remark \ref{facets}(c).

For the equivalence of (3) to (1) and (2) we note that the
natural surjection from $K[M]$ to $K[M']$ factors through
$K[M]/(X^x-X^y)$. Since $\rank M'=\rank M-1$, one has $\dim
K[M']=\dim K[M]-1$ (we consider Krull dimension here), and
$\Ker\phi$ is a height $1$ prime ideal. So the natural
isomorphism $K[M']=K[M]/\Ker\phi$ turns into
$K[M']=K[M]/(X^x-X^y)$ if and only if $\Ker\phi=(X^x-X^y)$.

For statement (4) to make sense, we need that $\phi(z)\neq 0$
for all $z\in M$. This assumption implies that we indeed obtain
a multigrading on $M$ by setting $\deg z=\deg \phi(z)$. The
equivalence of (4) follows by the same argument: one has
$$
H_{K[M]/(X^x-X^y)}=(1-T^g)H_{K[M]}
$$
since $X^x-X^y$ is homogeneous of degree $g$, and
$H_{K[M']}=(1-T^g)H_{K[M]}$ if and only if the two algebras are
isomorphic.
\end{proof}

\begin{remark}\label{facets}
(a) Condition (3) has been formulated in view of the
applications below. If (1) holds, then $K[M]/(X^x-X^y)$ is
automatically an affine monoid domain $K[M']$ whose underlying
monoid is the image of $M$ in $\gp(M)/\ZZ(x-y)$
\cite[4.32]{BG}.

(b) It is not hard to see that monomials
$X^{x_1},\dots,X^{x_n}$ form a $K[M]$-sequence if and only if
$X^{x_i},X^{x_j}$ is a $K[M]$-sequence for all $i\neq j$.
Nevertheless condition (2)(a) is not easy to check in general.
If $K[M]$ is Cohen-Macaulay or satisfies at least Serre's
condition $(S_2)$, for example if $M$ is normal, then (2)(a) is
equivalent to the fact that there is no facet $F$ of $\cone(M)$
with $x,y\notin F$, or, in other words, every facet contains at
least one of $x$ or $y$. Indeed, in a ring satisfying $(S_2)$
the associated prime ideals of non-zerodivisors have height
$1$, and the height $1$ monomial prime ideals are exactly those
spanned by the monomials $X^z$, $z\notin F$, for some facet $F$
of $\cone(M)$ \cite[4.D]{BG}.

(c) One should note that $K[\ZZ^d]/I$ is not only a domain, but
even a regular domain if $I$ is generated by binomials
$X^{x_1}-X^{y_1},\dots,X^{x_n}-X^{y_n}$ such that
$x_1-y_1,\dots,x_n-y_n$ generate a rank $n$ direct summand. By
induction it is enough to prove the claim for $n=1$, $x=x_1$,
$y=y_1$. With respect to a of basis of $\ZZ^d$ containing $y-x$
as the first element, $K[\ZZ^d]=K[Y_1^{\pm1},\dots,Y_d^{\pm1}]$
with $Y_1=X^{y-x}$, and $K[\ZZ^d]/(1-Y_1)$ arises from the
regular domain $K[Y_1,\dots,Y_d]/(1-Y_1)$ by the inversion of
the monomials in $Y_1,\dots,Y_d$.
\end{remark}

For a finite subset $A\subset \ZZ^m$ let the (automatically
positive) \emph{monoid $M(A)$ over $A$}  be the submonoid of
$\ZZ^{m+1}$ generated by the vectors $(x,1)\in\ZZ^{m+1}$, $x\in
A$. This type of monoid will play a special role later on, but
is useful already now for the construction of examples.

\begin{figure}[hbt]
\psset{unit=1.5cm}
$$
\def\vertex{\pscircle*{0.05}}
\begin{pspicture}(0,0)(1.6,1.6)
\pspolygon(0,0)(1,0)(0,1)
\rput(0.3,0.3){\pspolygon(0,0)(1,0)(0,1)}
\rput(0,0){\psline(0,0)(0.3,0.3)}
\rput(1,0){\psline(0,0)(0.3,0.3)}
\rput(0,1){\psline(0,0)(0.3,0.3)}
\rput(0,0){\rput(0,0){\vertex}\rput(0,1){\vertex}\rput(1,0){\vertex}}
\rput(0.3,0.3){\rput(0,0){\vertex}\rput(0,1){\vertex}\rput(1,0){\vertex}}
\psline[linestyle=dashed](0.2,1.486)(1.1,-0.186)
\rput(0.5,1.5){$x$}
\rput(1.2,0.0){$y$}
\rput(-0.2,1.0){$u$}
\rput(-0.2,0.0){$w$}
\rput(1.5,0.3){$z$}
\rput(0.15,0.45){$v$}
\end{pspicture}
\qquad
\begin{pspicture}(0,0)(2,1.6)
\pspolygon(0,0)(2,0)(2,1)(1,1)
\rput(0,0){\vertex}\rput(1,1){\vertex}\rput(1,0){\vertex}
\rput(2,0){\vertex}\rput(2,1){\vertex}
\psline[linestyle=dashed](-0.2,-0.1)(2.2,1.1)
\rput(0,-0.2){$u$}
\rput(1,-0.2){$x=y$}
\rput(2,-0.2){$z$}
\rput(2.2,0.9){$v$}
\rput(0.8,1.0){$w$}
\end{pspicture}
\qquad
\begin{pspicture}(0,0)(3,1.6)
\rput(0,0){\vertex}\rput(3,0){\vertex}\rput(1,0){\vertex}
\rput(2,0){\vertex}
\psline(0,0)(3,0)
\rput(2,-0.2){$x=y$}
\rput(1,-0.2){$u=v$}
\rput(0,-0.2){$w$}
\rput(3,-0.2){$z$}
\end{pspicture}
$$
\caption{Successive identification of lattice points}\label{succ}
\end{figure}
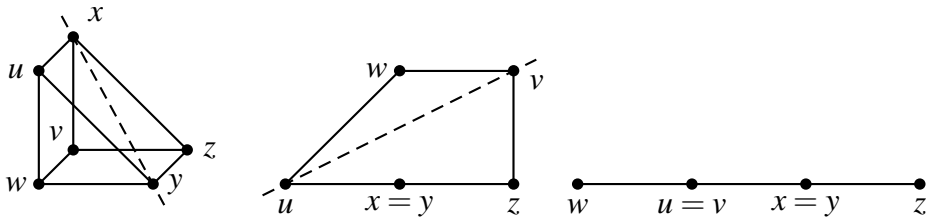
The geometry behind Theorem \ref{prime} is illustrated by
Figure \ref{succ}. We start from the monoid $M(A)$ where $A$ is
the set of vertices of the direct product of the unit
$2$-simplex and the unit $1$-simplex. The monoid $M'$ arising
from the identification of $x$ and $y$ is then defined by the
$5$ lattice points of the quadrangle in the middle, and if we
further identify $u$ and $v$, we end with the line segment on
the right with its $4$ lattice points. The polytopes in the
middle and on the right are obtained from their left neighbors
by projection along the line through the identified points,
indicated by $x=y$ and $u=v$.

We generalize the theorem to sequences of more than two
elements, leaving the generalization of (3) and (4) to the
reader.

\begin{corollary}\label{prime_mult}
With $K$ and $M$ as in Theorem \ref{prime}, let
$x_1,\dots,x_n$, $n\ge 2$, be noninvertible elements of $M$.
Then the following statements (1) and (2) are equivalent:
\begin{enumerate}
\item $X^{x_1}-X^{x_2},\dots,X^{x_{n-1}}-X^{x_n}$ is a
    $K[M]$-sequence and generates a prime ideal $P$.
\item
\begin{enumerate}
\item $X^{x_1},\dots,X^{x_n}$ is a $K[M]$-sequence;
\item $x_1-x_2,\dots,x_{n-1}-x_n$ generate a rank $n-1$
    direct summand of $\gp(M)$.
\end{enumerate}
\end{enumerate}
\end{corollary}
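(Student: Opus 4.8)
The plan is to run an induction on $n$, with the base case $n=2$ being exactly Theorem \ref{prime} (after checking that condition (2)(b) of the theorem, torsionfreeness of $\gp(M)/\ZZ(x_1-x_2)$, is the $n=2$ instance of condition (2)(b) here, that $x_1-x_2$ generates a rank~$1$ direct summand). So assume $n\ge 3$ and the statement holds for shorter sequences. Write $R=K[M]$ and let $f_i=X^{x_i}-X^{x_{i+1}}$ for $i=1,\dots,n-1$.

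For the implication (2) $\Rightarrow$ (1): First I would apply Theorem \ref{prime} to the pair $x_1,x_2$ — by (2)(a) the monomials $X^{x_1},X^{x_2}$ form an $R$-sequence, and by (2)(b) the element $x_1-x_2$ generates a rank~$1$ direct summand, hence $\gp(M)/\ZZ(x_1-x_2)$ is torsionfree — so $f_1$ generates a prime ideal and, by part (3) of Theorem \ref{prime} (applied with the natural surjection $M\to M'$ where $M'$ is the image of $M$ in $\gp(M)/\ZZ(x_1-x_2)$), we get $R/(f_1)\cong K[M']$, again an affine monoid domain. Now I must transport conditions (2)(a) and (2)(b) to $M'$ for the sequence of images $x_2',\dots,x_n'$ (note $x_1'=x_2'$, so the relevant shorter sequence is $x_2',\dots,x_n'$, of length $n-1$). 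Condition (2)(b) for $M'$ — that $x_2'-x_3',\dots,x_{n-1}'-x_n'$ generate a rank $n-2$ direct summand of $\gp(M')=\gp(M)/\ZZ(x_1-x_2)$ — follows from the hypothesis that $x_1-x_2,\dots,x_{n-1}-x_n$ generate a rank $n-1$ direct summand of $\gp(M)$: quotienting a direct summand by one of a set of free generators leaves the images of the others a direct summand of the quotient. Condition (2)(a) for $M'$, namely that $X^{x_2'},\dots,X^{x_n'}$ is a $K[M']$-sequence, needs that $f_1,X^{x_2},\dots,X^{x_n}$ behave well; since $X^{x_1},X^{x_2},\dots,X^{x_n}$ is an $R$-sequence and $f_1=X^{x_1}-X^{x_2}$, one checks that $f_1,X^{x_2},\dots,X^{x_n}$ is also an $R$-sequence (replacing the first element of an $R$-sequence by itself minus the second preserves the $R$-sequence property in this monomial situation, e.g.\ via the associated-prime argument of the theorem's proof, or directly), whence $X^{x_2'},\dots,X^{x_n'}$ is a $K[M']=R/(f_1)$-sequence. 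The induction hypothesis applied to $M'$ now yields that $\bar f_2,\dots,\bar f_{n-1}$ is a $K[M']$-sequence generating a prime ideal; pulling back through $R\to R/(f_1)=K[M']$ gives that $f_1,f_2,\dots,f_{n-1}$ is an $R$-sequence generating a prime ideal, which is (1).

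For (1) $\Rightarrow$ (2): assuming $f_1,\dots,f_{n-1}$ is an $R$-sequence generating a prime ideal $P$, in particular $f_1$ generates a prime ideal and is a nonzerodivisor, so Theorem \ref{prime} (in the direction (1) $\Rightarrow$ (2)) gives that $X^{x_1},X^{x_2}$ is an $R$-sequence and $x_1-x_2$ generates a rank~$1$ direct summand; moreover $R/(f_1)\cong K[M']$ as above. Then $\bar f_2,\dots,\bar f_{n-1}$ is a $K[M']$-sequence generating the prime ideal $P/(f_1)$, so by the induction hypothesis applied to $M'$ and the sequence $x_2',\dots,x_n'$ we obtain that $X^{x_2'},\dots,X^{x_n'}$ is a $K[M']$-sequence and $x_2'-x_3',\dots,x_{n-1}'-x_n'$ generate a rank $n-2$ direct summand of $\gp(M')$. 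It remains to lift these two facts: that $X^{x_2},\dots,X^{x_n}$ (together with the already-established $X^{x_1}$) form an $R$-sequence follows by combining the $R$-sequence $X^{x_1},X^{x_2}$ with the lifted sequence and using Remark \ref{facets}(b)'s pairwise criterion — or more elementarily, from the fact that $X^{x_1},X^{x_2},\dots$ and $f_1,X^{x_2},\dots$ are interchangeable as noted above; and that $x_1-x_2,\dots,x_{n-1}-x_n$ generate a rank $n-1$ direct summand of $\gp(M)$ follows from the extension $0\to\ZZ(x_1-x_2)\to\gp(M)\to\gp(M')\to0$ together with the fact that $\ZZ(x_1-x_2)$ is a direct summand and the images form a direct summand of $\gp(M')$.

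I expect the main obstacle to be the bookkeeping around condition (2)(a) — specifically, making rigorous the claim that one may freely pass between the $R$-sequence $X^{x_1},X^{x_2},\dots,X^{x_n}$ and the mixed sequence $f_1,X^{x_2},\dots,X^{x_n}$, and then read off from the latter that the images $X^{x_2'},\dots,X^{x_n'}$ form a $K[M']$-sequence. The cleanest route is probably the associated-primes argument already used in the proof of Theorem \ref{prime}: associated primes of monomials are monomial, so an associated prime of $R/(f_1)$ cannot contain $X^{x_1}$ or $X^{x_2}$ (else it would contain $f_1$ and be non-monomial, contradiction), and from there one checks inductively that none of $X^{x_3},\dots,X^{x_n}$ is a zerodivisor on successive quotients; the direct-summand condition (2)(b) then rules out any torsion obstruction exactly as in the theorem. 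This is the step where one must be careful not to confuse "$R$-sequence on $R$" with "$R$-sequence on $R/(f_1)$," but no genuinely new idea beyond the $n=2$ case is required.
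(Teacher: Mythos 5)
Your direction (2) $\implies$ (1) is essentially sound and is a legitimate variant of the paper's induction. The paper peels off the \emph{last} binomial: by the induction hypothesis $K[M]/(X^{x_1}-X^{x_2},\dots,X^{x_{n-2}}-X^{x_{n-1}})$ is already an affine monoid domain $K[M']$, and one only has to check that the single pair $X^{x_{n-1}},X^{x_n}$ remains a regular sequence there, which follows from a depth count ($\depth K[M]_Q\ge n$ for the preimage $Q$ of any offending prime, minus $n-2$ for the regular sequence, leaves depth $\ge 2$). You peel off the \emph{first} binomial and must therefore transport all of (2)(a) to $K[M']$. The step you flag as the main obstacle --- that $X^{x_1}-X^{x_2},X^{x_2},\dots,X^{x_n}$ is again a $K[M]$-sequence --- is genuinely delicate (recombining generators of an ideal generated by a regular sequence is not harmless outside the local or graded setting; the paper devotes Remark \ref{order}(a) to a claim of exactly this kind), but the associated-primes/depth argument you sketch does go through: an associated prime of $(X^{x_2'},\dots,X^{x_j'})$ in $K[M']$ is monomial, its preimage $Q$ in $K[M]$ contains the regular sequence $X^{x_1},\dots,X^{x_j}$, so $\depth K[M]_Q\ge j$, while being an associated prime would force $\depth K[M]_Q=j-1$. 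So this half is acceptable, if more laborious than the paper's.

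The genuine gap is in (1) $\implies$ (2). You begin with ``in particular $f_1=X^{x_1}-X^{x_2}$ generates a prime ideal.'' This does not follow from (1): a regular sequence generating a prime ideal need not have prime initial subideals. For instance, in $K[X,Y]$ with $\operatorname{char}K\neq 2$ the sequence $X^2-Y^2,\ X+Y-1$ is regular and generates the maximal ideal $(X-\tfrac12,\,Y-\tfrac12)$, yet $(X^2-Y^2)$ is not prime. In the situation at hand $(f_1)$ \emph{is} prime, but the only visible way to see this is to first establish that $X^{x_1},X^{x_2}$ is a $K[M]$-sequence --- a piece of (2)(a), i.e., of the very conclusion your induction is meant to deliver --- and then invoke Theorem \ref{prime}. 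So your induction for (1) $\implies$ (2) is circular at its first step. The paper avoids induction in this direction altogether: it proves (2)(a) by a direct grade computation (reduce to the localization at the maximal monomial prime, then show $\depth K[M]_{Q'}\ge n$ for every prime $Q'\supset(X^{x_1},\dots,X^{x_n})$, using that such a $Q'$ contains the length-$(n-1)$ regular sequence generating $P$ \emph{and} the monomial $X^{x_n}\notin P$), and obtains (2)(b) from the primeness of the extension of $P$ to the Laurent polynomial ring $K[\gp(M)]$. Replacing your first step by this direct argument repairs the proof, but then the induction in that direction becomes superfluous.
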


\begin{proof}
For the proof of the implication (1)$\implies$(2)(a) let $Q$ be
the prime ideal of $K[M]$ generated by all noninvertible
monomials. Since the associated prime ideals of monomial ideals
are themselves monomial, $X^{x_1},\dots,X^{x_n}$ is a
$K[M]$-sequence if and only if it is a $K[M]_Q$-sequence, and
the latter property follows from $\depth K[M]_{Q'}\ge n$ for
all prime ideals $Q'\supset (X^{x_1},\dots,X^{x_n})$
\cite[1.6.19]{BH}.

A prime ideal $Q'\supset (X^{x_1},\dots,X^{x_n})$ contains the
regular sequence $X^{x_1}-X^{x_2}, \dots,X^{x_{n-1}}-X^{x_n}$
of length $n-1$ that generates the prime ideal $P$. Moreover
$Q$ contains $X^{x_n}$ and $\notin P$. This implies $\depth
K[M]_{Q'}\ge n$, and (2)(a) has been verified. (2)(b) follows
since the extension of $P$ to $K[\gp(M)]$ is a prime ideal
\cite[4.32]{BG}.

For (2) $\implies$ (1) we use induction for which the starting
case $n=2$ is covered by the theorem. Let
$P'=(X^{x_1}-X^{x_2},\dots,X^{x_{n-2}}-X^{x_{n-1}})$; by
induction $K[M]/P'$ is an affine monoid domain $K[M']$ (see
Remark \ref{facets}(a)). The only critical condition is whether
$X^{x_{n-1}},X^{x_n}$ is a $K[M']$-sequence since (2)(b) of the
theorem is evidently satisfied. Let $Q'$ be a prime ideal in
$K[M']$ containing $X^{x_{n-1}},X^{x_n}$, and let $Q$ be its
preimage in $K[M]$. Then $Q$ contains the total sequence
$X^{x_1},\dots,X^{x_n}$, and we conclude $\depth K[M]_Q\ge n$.
But modulo the regular sequence
$X^{x_1}-X^{x_2},\dots,X^{x_{n-2}}-X^{x_{n-1}}$ of length $n-2$
the depth goes down by $n-2$, and therefore $\depth
K[M']_{Q'}\ge 2$. This makes it impossible that $X^n$ is a
zerodivisor modulo $X^{x_{n-1}}$ in $K[M']$.
\end{proof}

\begin{remark}\label{order}
(a) For the proof of the implication (1) $\implies$ (2)(a) of
Corollary \ref{prime_mult} we have only used that
$X^{x_1}-X^{x_2},\dots,X^{x_{n-1}}-X^{x_n},X^{x_n}$ is a
$K[M]$-sequence. The converse does also hold.

Since $(X^{x_1}-X^{x_2},\dots,X^{x_{n-1}}-X^{x_n},X^{x_n})=
(X^{x_1},\dots,X^{x_n})$, the same argument that has been used
for (1) $\implies$ (2)(a) shows that
$X^{x_1}-X^{x_2},\dots,X^{x_{n-1}}-X^{x_n},X^{x_n}$ is a
$K[M]_{Q'}$-sequence. The only problem is to lift regularity of
the sequence to $K[M]$. We can no longer use the fine grading,
but it is sufficient that there is a multigrading for which (i)
$Q'$ is the ideal generated by the noninvertible homogeneous
elements, and (ii)
$X^{x_1}-X^{x_2},\dots,X^{x_{n-1}}-X^{x_n},X^{x_n}$ are
homogeneous. Then we are dealing with homogeneous elements in
the $\vphantom{x}^*\negthinspace$maximal ideal $Q'$ of the
$\vphantom{x}^*\negthinspace$local ring $K[M]$. See
\cite[1.5.15(c)]{BH} that covers the case of positive
$\ZZ$-gradings; however, it is solely relevant that the grading
group is torsionfree (Bourbaki \cite[Ch. 4, § 3, no. 1]{Bou}).

It remains to find a suitable grading. To this end we let $U$
be the saturation of $\ZZ(x_1-x_2)+\dots+\ZZ(x_{n-1}-x_n)$ in
$\gp(M)$. Then $G=\gp(M)/U$ is torsionfree, and the natural
homomorphism $\gp(M)\to G$ is the right choice.

(b) We have assumed in Theorem \ref{prime} and in Corollary
\ref{prime_mult} that $x_1,\dots,x_n$ are noninvertible. If one
allows that one of the $x_i$ is a unit in $M$, then (2)(a)
makes no sense anymore since the definition of $K[M]$-sequence
comprises the condition $(x_1,\dots,x_n)\neq K[M]$. But
dropping this requirement and keeping only that $x_i$ is not a
zerodivisor modulo $(x_1,\dots,x_{i-1})$ for $i=1,\dots,n$ is
not the way out.

The ideal $P$ generated by the $X^{x_{i-1}}-X^{x_i}$ is
independent of the order of the $x_i$, and especially its
primeness does not depend on the order. However, the second
property in (1), namely that the generators form a
$K[M]$-sequence, may be order sensitive if one of the $x_i$ is
a unit and we have left the shelter of the ``roof'' $Q'$ above.
For a concrete example set $M=M(A)$ where $A$ is the set of the
vertices $3$-dimensional unit cube, and $x,y,z$ are chosen as
indicated in Figure \ref{cube}.
\begin{figure}[hbt]
\psset{unit=1.5cm}
$$
\def\vertex{\pscircle*{0.05}}
\begin{pspicture}(0,0)(1.6,1.6)
\pspolygon(0,0)(1,0)(1,1)(0,1)
\rput(0.3,0.3){\pspolygon(0,0)(1,0)(1,1)(0,1)}
\rput(0,0){\psline(0,0)(0.3,0.3)}
\rput(1,0){\psline(0,0)(0.3,0.3)}
\rput(0,1){\psline(0,0)(0.3,0.3)}
\rput(1,1){\psline(0,0)(0.3,0.3)}
\rput(0,0){\rput(0,0){\vertex}\rput(0,1){\vertex}\rput(1,0){\vertex}\rput(1,1){\vertex}}
\rput(0.3,0.3){\rput(0,0){\vertex}\rput(0,1){\vertex}\rput(1,0){\vertex}\rput(1,1){\vertex}}
\rput(-0.2,1.0){$x$}
\rput(-0.2,0.0){$y$}
\rput(1.2,0.0){$z$}
\end{pspicture}
$$
\caption{The unit cube}\label{cube}
\end{figure}
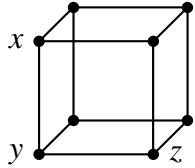
Then $X^x-X^y,X^y-X^z,X^z-1$, corresponding to $x,y,z,0\in M$,
is not a $K[M]$-sequence, although the permutation
$X^z-1,X^y-X^z,X^x-X^y$, corresponding to $0,z,y,x\in M$, is a
$K[M]$-sequence, and both sequences generate the same prime
ideal $P$. In fact, $K[M]/P$ is isomorphic to the polynomial
ring in one variable over $K$.

This not really a surprise: in a non-local situation the fact
that an ideal $P$ is generated by a regular sequence of length
$3$ does not imply that every length $3$ sequence generating
$P$ is regular.

The order that just made the generators of $P$ a
$K[M]$-sequence does always work: (2)(b) alone is equivalent to
(1), provided $x_1$ is a unit. Under this assumption all
arguments remain essentially unchanged, except that the set of
monomial ideals containing $x_1,\dots,x_n$ is automatically
empty.

(c) Binomial regular sequences in polynomial rings $K[\ZZ_+^m]$
have been investigated in Fischer, Morris and Shapiro
\cite{FMS} and Fischer and Shapiro \cite{FS}.
\end{remark}

We now turn to a situation in which the conditions of Theorem
\ref{prime} are automatically satisfied.

\begin{corollary}\label{dirsum}
Let $L$,$M$ and $N$ be affine monoids, $\phi:M\dirsum N\to L$ a
surjective homomorphism with $\rank L=\rank M+\rank N-1$, and
suppose that $\phi(x)=\phi(y)$ for $x\in M$, $y\in N$, $x\neq
0$ or $y\neq 0$. Furthermore assume that nonzero $\phi(z)\neq
0$ for all $z\in M\dirsum N$ and let $L$ be positively
multigraded such that $\deg \phi(x)$ is a unimodular element of
the grading group.

Then $K[L]\iso K[M\dirsum N]/(X^x-X^y)$ and
$H_{L}=(1-T^g)H_{M\dirsum N}$ with respect to the grading on
$M\dirsum N$ induced by the grading on $L$.
\end{corollary}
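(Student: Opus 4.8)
The plan is to deduce Corollary \ref{dirsum} directly from Theorem \ref{prime} by verifying its hypotheses (1)--(4) in the special setting of a direct sum. Set $P=M\dirsum N$, an affine monoid of rank $\rank M+\rank N$, and let $\phi$ play the role of the surjection in the theorem, with $x,y$ identified with $(x,0)$ and $(0,y)$ in $P$. By assumption $\rank L=\rank P-1$ and $\phi(x)=\phi(y)$, so condition (3) of the theorem will apply once we know (2) holds; the positivity of the multigrading on $L$ together with $\phi(z)\neq 0$ for nonzero $z$ is exactly what is needed for (4) to make sense and follow. So the crux is to check condition (2): that $X^x,X^y$ is a $K[P]$-sequence and that $\gp(P)/\ZZ(x-y)$ is torsionfree.

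First I would dispose of (2)(b). Here $\gp(P)=\gp(M)\dirsum\gp(N)$, and the element $x-y=(x,-y)$ spans $\ZZ(x-y)$. The hypothesis that $\deg\phi(x)$ is a unimodular element of the grading group is the key: since $\deg$ is defined on $\gp(P)$ via $\phi$ and the grading on $L$, and $\deg(x-y)=\deg\phi(x)-\deg\phi(y)=0$, one checks that $x-y$ must itself be unimodular in $\gp(P)$ --- indeed if $x-y=k\cdot v$ for $v\in\gp(P)$ and $k\ge 2$, then applying $\phi$ and $\deg$ and using that $L$ is positive and $\phi$ injective-on-degrees would force a contradiction with unimodularity of $\deg\phi(x)$, after noting $\phi(x-y)=0$ in $\gp(L)$ is impossible for $x,y$ not both zero since $\rank L$ drops by exactly one. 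Hence $\ZZ(x-y)$ is a rank-one direct summand of $\gp(P)$ and the quotient is free.

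Next, (2)(a): $X^x,X^y$ is a $K[P]$-sequence. Since $K[P]=K[M]\otimes_K K[N]$ and $X^x=X^x\otimes 1$, $X^y=1\otimes X^y$, regularity is almost automatic. Indeed $X^x$ is a nonzerodivisor in the domain $K[P]$; and $K[P]/(X^x)\iso (K[M]/(X^x))\otimes_K K[N]$, in which $X^y=1\otimes X^y$ is a nonzerodivisor because $X^y$ is one in $K[N]$ (a domain) and we are tensoring over a field, so the element $1\otimes X^y$ does not become a zerodivisor. One must also verify $(X^x,X^y)\neq K[P]$: this holds because $x,y$ are noninvertible in $M,N$, so $X^x,X^y$ lie in the maximal monomial ideal. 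Thus (2) holds, Theorem \ref{prime} gives (1), (3) and --- using the positivity of the grading on $L$ and $\phi(z)\neq 0$ --- (4), which are exactly the two assertions $K[L]\iso K[P]/(X^x-X^y)$ and $H_L=(1-T^g)H_P$.

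The main obstacle I anticipate is the unimodularity argument in (2)(b): one has to be careful that $x-y$ is genuinely unimodular in $\gp(M)\dirsum\gp(N)$ and not merely that its image has a unimodular degree. The cleanest route is probably to observe that the composite $\gp(P)\xrightarrow{\phi}\gp(L)\xrightarrow{\deg}$ (grading group) kills $x-y$, that $\phi$ itself has kernel of rank $1$ (since $\rank L=\rank P-1$ and $\phi$ is surjective onto a monoid generating $\gp(L)$ up to finite index --- one may need to replace $\gp(L)$ by $\gp(\phi(P))$ and note these agree), and that $x-y$ generates that kernel: if $\Ker(\gp(\phi))=\ZZ w$ with $x-y=kw$, then $\deg\phi$ would factor, and unimodularity of $\deg\phi(x)$ forces $k=\pm1$. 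Then $\ZZ(x-y)=\Ker(\gp(\phi))$ is a direct summand because the quotient $\gp(P)/\ZZ(x-y)\cong\Im(\gp(\phi))\subseteq\gp(L)$ is torsionfree. This also re-proves (2)(b) for free and feeds back into (3).
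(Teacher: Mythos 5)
Your overall strategy is exactly the paper's: reduce to Theorem \ref{prime} by checking (2)(a) and (2)(b), then invoke (3) and (4). Your verification of (2)(a) via $K[M\dirsum N]=K[M]\otimes_K K[N]$ is fine (this is the ``trivial exercise'' the paper leaves to the reader), and your observation that the hypotheses force both $x$ and $y$ to be nonzero and noninvertible is correct. The gap is in (2)(b), which is the one step the paper actually writes out, and neither of your two attempts at it is sound as stated. Your first attempt asserts that ``$\phi(x-y)=0$ in $\gp(L)$ is impossible'' --- but $\gp(\phi)(x-y)=\phi(x)-\phi(y)=0$ is a \emph{hypothesis}, not an impossibility; consequently ``applying $\phi$ and $\deg$'' to $x-y=kv$ only yields $0=k\cdot\deg\phi(v)$, which contradicts nothing. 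Your second attempt correctly identifies $\Ker(\gp(\phi))=\ZZ w$ as a rank-one direct summand with $x-y=kw$, but the phrase ``$\deg\phi$ would factor, and unimodularity of $\deg\phi(x)$ forces $k=\pm1$'' does not produce the divisibility you need: the composite $\deg\circ\gp(\phi)$ kills \emph{all} of $\ZZ w$ regardless of $k$, so by itself it cannot see $k$. The quantity $\deg\phi(x)$ is simply not related to $k$ through any map defined on $\gp(M\dirsum N)$ that kills $x-y$.

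The missing idea is to exploit the direct-sum decomposition \emph{componentwise}, which is what the paper does: define $h:\gp(M)\dirsum\gp(N)\to G\dirsum G$, $(u,v)\mapsto(\deg\phi(u),\deg\phi(v))$, where $G$ is the grading group. Then $h(x-y)=h(x,-y)=(g,-g)$ with $g=\deg\phi(x)=\deg\phi(y)$ unimodular, and $(g,-g)$ is unimodular in $G\dirsum G$ (compose with the first projection and a functional sending $g$ to $1$); since a homomorphic image of $x-y$ is unimodular, so is $x-y$. Equivalently, your second route can be repaired by writing $w=(w_1,w_2)$ with $w_1\in\gp(M)$, $w_2\in\gp(N)$: then $x-y=(x,-y)=k(w_1,w_2)$ gives $x=kw_1$ in $\gp(M)$, hence $\deg\phi(x)=k\cdot\deg\phi(w_1)$ is divisible by $k$ in $G$, and unimodularity of $\deg\phi(x)$ forces $k=\pm1$. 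Either way, the separation of the two coordinates is essential and is exactly what your write-up omits.
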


\begin{proof}
We must verify the conditions (1)(a) and (b) of the theorem.
For (a) the verification is a trivial exercise. For (b) let $G$
be the grading group of $L$. The grading on $L$ induces
gradings on $M$ and $N$ via the embedding of $M$ and $N$,
respectively, into $M\dirsum N$. Consider the homomorphism
$M\dirsum N\to G\dirsum G$, $(u,v)\mapsto (\deg u,\deg v)$.
Under this homomorphism $x-y=(x,-y)$ goes to the unimodular
element $(\deg x,-\deg y)$ of $G\dirsum G$. Therefore $x-y$ is
unimodular in $\gp(M\dirsum N)$.
\end{proof}

As a special case of Corollary \ref{dirsum} we can consider the
free sum of point configurations. Following \cite{Be3} let
$A,B\subset \RR^m$. We say that $A\cup B$ is the \emph{free
sum} of $A$ and $B$ if $0\in A\cap B$ and the vector subspaces
$\RR A$ and $\RR B$ of $\RR^m$ intersect only in $0$. The
relationship between $M(A\cup B)$ and $M(A)\dirsum M(B)$ is
given by part (1) of the next corollary in terms of monoid
algebras.

\begin{corollary}\label{freesum}
 Let $A$ and $B$ be finite subsets of $\ZZ^m$ such that $A\cup
B$ is the free sum of $A$ and $B$. Set $x=(0,1)\dirsum 0$,
$y=0\dirsum (0,1)$. Then
\begin{enumerate}
\item $K[M(A\cup B)]\iso K[M(A)\dirsum M(B)]/(X^x-X^y)$;
\item $\HH_{M(A\cup B)}=(1-T_{m+1})\HH_{M(A)}\HH_{M(B)}$.
\end{enumerate}
\end{corollary}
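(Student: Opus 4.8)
The plan is to deduce both statements from Corollary \ref{dirsum}, applied with $L=M(A\cup B)$, $M=M(A)$ and $N=M(B)$. The homomorphism to feed in is the restriction $\phi$ to $M(A)\dirsum M(B)$ of the group homomorphism $\ZZ^{m+1}\dirsum\ZZ^{m+1}\to\ZZ^{m+1}$, $(u,k)\dirsum(v,l)\mapsto(u+v,\,k+l)$. It sends the generator $(a,1)\dirsum 0$ to $(a,1)$ and $0\dirsum(b,1)$ to $(b,1)$, so its image is the submonoid of $\ZZ^{m+1}$ generated by all $(c,1)$ with $c\in A\cup B$, namely $M(A\cup B)$; hence $\phi$ is surjective. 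On the summand $M(A)\dirsum 0$ the map $\phi$ agrees with the embedding $M(A)\hookrightarrow\ZZ^{m+1}$, and likewise on $0\dirsum M(B)$. Since $0\in A\cap B$, the vector $(0,1)$ lies in both $M(A)$ and $M(B)$, so $x=(0,1)\dirsum 0$ and $y=0\dirsum(0,1)$ are elements of $M(A)\dirsum M(B)$ with $\phi(x)=\phi(y)=(0,1)$ and $x\neq0$.

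Next I would verify the remaining hypotheses of Corollary \ref{dirsum}. Because $0\in A$, the group $\gp(M(A))$ is generated by $(0,1)$ together with the vectors $(a,0)$, $a\in A$, so $\rank M(A)=\dim_\RR\RR A+1$; the same applies to $B$ and, since $0\in A\cup B$, to $A\cup B$. The free-sum hypothesis gives $\RR(A\cup B)=\RR A+\RR B$ with $\RR A\cap\RR B=0$, hence $\rank M(A\cup B)=\rank M(A)+\rank M(B)-1$. Now equip $L=M(A\cup B)$ with its fine grading, the embedding into $\ZZ^{m+1}$. It is positive, because the $(m+1)$st coordinate restricts to a positive $\ZZ$-grading on $M(A\cup B)$ (each generator $(c,1)$ has last coordinate $1$, and $0$ is the only element of last coordinate $0$); moreover $\deg\phi(x)=(0,1)$ is the $(m+1)$st unit vector, hence unimodular, and $\phi(z)=0$ forces $z=0$ since the last coordinate of $\phi(z)$ is the sum of the nonnegative last coordinates of the two components of $z$. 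All hypotheses of Corollary \ref{dirsum} are thus met, and it yields $K[M(A\cup B)]\iso K[M(A)\dirsum M(B)]/(X^x-X^y)$, which is (1), together with $H_{M(A\cup B)}=(1-T^{(0,1)})H_{M(A)\dirsum M(B)}=(1-T_{m+1})H_{M(A)\dirsum M(B)}$ for the grading on $M(A)\dirsum M(B)$ induced by $\phi$.

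It remains to recognize $H_{M(A)\dirsum M(B)}$ with this induced grading as $\HH_{M(A)}\HH_{M(B)}$. For $w\in\ZZ^{m+1}$ the induced Hilbert function counts the pairs $(p,q)\in M(A)\times M(B)$ with $\phi(p\dirsum q)=p+q=w$; since $\phi$ restricts to the embedding on each summand, this is exactly the coefficient of $T^w$ in $\bigl(\sum_{p\in M(A)}T^p\bigr)\bigl(\sum_{q\in M(B)}T^q\bigr)=\HH_{M(A)}\HH_{M(B)}$. Substituting into the previous identity, and noting that $H_{M(A\cup B)}$ for the fine grading is $\HH_{M(A\cup B)}$ by definition, gives $\HH_{M(A\cup B)}=(1-T_{m+1})\HH_{M(A)}\HH_{M(B)}$, which is (2).

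Since the whole argument only verifies that the hypotheses of Corollary \ref{dirsum} hold, I expect no genuine obstacle. The sole place where the full free-sum hypothesis enters in an essential way is the rank identity, which rests on $\RR A\cap\RR B=0$; the condition $0\in A\cap B$ is used only to ensure that $(0,1)$ lies in both $M(A)$ and $M(B)$, so that the binomial $X^x-X^y$ is defined. The one step that needs a little care is the last one: here it is the common $(m+1)$st coordinate of the two embeddings --- not the independence of $\RR A$ and $\RR B$ --- that makes the grading induced on $M(A)\dirsum M(B)$ coincide with the ``sum'' of the two fine gradings.
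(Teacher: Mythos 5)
Your proof is correct and follows the same route as the paper: both reduce the statement to Corollary \ref{dirsum} via the natural surjection $M(A)\dirsum M(B)\to M(A\cup B)$, $(u,k)\dirsum(v,l)\mapsto(u+v,k+l)$. You merely spell out in more detail the hypothesis checks (rank identity, positivity, unimodularity of $(0,1)$) and the identification of $H_{M(A)\dirsum M(B)}$ in the induced grading with the product $\HH_{M(A)}\HH_{M(B)}$, which the paper leaves implicit.
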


\begin{proof}
We set $M=M(A)$, $N=M(B)$ and $L=M(A\cup B)$. Then the natural
embeddings $M\subset L$ and $N\subset L$ induce a surjective
homomorphism $M\dirsum N\to L$, $(x,k)\dirsum(y,l)\mapsto
(x+y,k+l)$. Both $x$ and $y$ go to $(0,1)\in L\subset
\ZZ^{m+1}$, and therefore to a unimodular element in $\gp(L)$.
It only remains to apply Corollary \ref{dirsum}.
\end{proof}

\begin{remark} We have formulated Corollary \ref{freesum} for
the fine grading. Since every other grading is a specialization
of the fine grading, the formula in (2)  holds for every
coarser grading as well. In particular it holds for the
\emph{standard grading} on $M(A)$, $M(B)$ and $M(A\cup B)$ in
which $\deg(x,k)=k\in \ZZ$.

The formula in (2) was stated (for the standard grading) in
\cite[Lemma 10]{BeH} without the factor $1-T_{m+1}$. Therefore
some of the results in \cite{BeH} need an analogous correction,
but this only concerns the denominators of the Hilbert series
appearing there, and the statements about the numerator
polynomials remain untouched.
\end{remark}

The construction of free sums has been generalized in
\cite{Be3} as follows. We consider subsets $A$ and $B$ of
\begin{figure}[hbt]
\psset{unit=1.5cm}
$$
\def\vertex{\pscircle*{0.05}}
\begin{pspicture}(0,0)(1.3,1.3)
\psline(-0.3,-0.3)(1.3,1.3)
\psline(1.3,-0.3)(-0.3,1.3)
\rput(0,0){\vertex}
\rput(1,0){\vertex}
\rput(0,1){\vertex}
\rput(1,1){\vertex}
\rput(-0.2,1.0){$A$}
\rput(1.2,1.0){$B$}
\psline{->}(-0.3,0)(1.3,0)
\psline{->}(0,-0.3)(0,1.3)
\end{pspicture}
$$
\caption{Intersection in a rational point}\label{simple}
\end{figure}
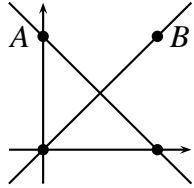
$\RR^m$ such that $\aff(A)$ and $\aff(B)$ meet in a single
point $p_0$; see Figure\ref{simple} for a very simple example.
In this example
$$
H_{M(A\cup B)}=\frac{1+T}{(1-T)^3}=(1-T^2)H_{M(A)}H_{M(B)}
$$
in the standard grading. The ``correction'' $1-T^2$ reflects
that $(2p_0,2)\in M(A)\cap M(B)$.

Let $A,B\subset \ZZ^m$ be finite and suppose that $\aff(A)$ and
$\aff(B)$ meet exactly in $p_0$; then necessarily
$p_0\in\QQ^m$. We can form $M(A)$, $M(B)$ and $M(A\cup B)$. If
$p_0\in \ZZ^m\cap A \cap B$, then we are in the situation of
Corollary \ref{freesum} after an affine-integral coordinate
transformation. But the comparison of the monoids is already
possible under a weaker assumption, as suggested by the example
above and \cite[Corollary 5.8]{Be3}, to which we will come back
in Corollary \ref{rational_EE}.

\begin{corollary}\label{rational_HH}
Let $A,B\subset \ZZ^m$ be finite and suppose that $\aff(A)$ and
$\aff(B)$ intersect in a single point $p_0$. Furthermore
suppose  $(kp_0,k)\in M(A)\cap M(B)$ for the smallest $k>0$
such that k$p_0\in \ZZ^m$. Set $x=(kp_0,k)\dirsum 0$ and
$y=0\dirsum (kp_0,k)$. Then
\begin{enumerate}
\item $K[M(A\cup B)]\iso K[M(A)\dirsum M(B)]/(X^x-X^y)$;
\item $\HH_{M(A\cup
    B)}=(1-T^{(kp_0,k)})\HH_{M(A)}\HH_{M(B)}$.
\end{enumerate}
\end{corollary}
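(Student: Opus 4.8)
The plan is to deduce both assertions from Corollary \ref{dirsum}, exactly as Corollary \ref{freesum} was; the only point that is not completely routine is the unimodularity of the degree vector, which this time is forced by the minimality of $k$ rather than by being the obvious vector $(0,1)$.

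First I would set $M=M(A)$, $N=M(B)$, $L=M(A\cup B)$, and let $\phi\colon M\dirsum N\to L$ be the homomorphism $(u,r)\dirsum(v,s)\mapsto(u+v,r+s)$ induced by the inclusions $M(A),M(B)\subset M(A\cup B)$; it is surjective because $L$ is generated by the $(a,1)$ with $a\in A\cup B$, each of which is the image of a generator. For the rank condition, $M(A)$ is generated by the lattice points of $A\times\{1\}$, so $\rank M(A)=\dim\aff(A)+1$, and likewise for $B$ and $A\cup B$; since $\aff(A)\cap\aff(B)=\{p_0\}$ is $0$-dimensional, $\dim\aff(A\cup B)=\dim\aff(A)+\dim\aff(B)$, whence $\rank L=\rank M+\rank N-1$. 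Setting $x=(kp_0,k)\dirsum 0$ and $y=0\dirsum(kp_0,k)$, the hypothesis $(kp_0,k)\in M(A)\cap M(B)$ gives $x\in M$ and $y\in N$; both map to $(kp_0,k)\in L$, and $x\ne 0\ne y$, so $x,y$ are noninvertible and distinct in the positive monoid $M\dirsum N$. Finally, since $M(A)$ and $M(B)$ are positive, every nonzero element of $M\dirsum N$ has a component with positive last coordinate, so its image under $\phi$ has positive last coordinate and is nonzero; thus $\phi(z)\ne 0$ for all nonzero $z$.

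It remains to put a positive multigrading on $L$ for which $\deg\phi(x)$ is unimodular, and I would take the fine grading, with grading group $\ZZ^{m+1}$; it is positive because $M(A\cup B)$ is a positive monoid. Here $g=\deg\phi(x)=(kp_0,k)$, and the short but essential step is to see that this vector is primitive in $\ZZ^{m+1}$: were a prime $\ell$ to divide every coordinate of $(kp_0,k)$, then $\ell\mid k$ and $\ell\mid kp_{0,i}$ for all $i$, so $(k/\ell)p_0\in\ZZ^m$ with $0<k/\ell<k$, contradicting the minimality of $k$. Now Corollary \ref{dirsum} applies and gives $K[L]\iso K[M\dirsum N]/(X^x-X^y)$, which is (1), together with $H_L=(1-T^{(kp_0,k)})H_{M\dirsum N}$ for the grading on $M\dirsum N$ induced from $L$. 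That induced grading sends $(u,r)\in M(A)$ to $\deg\phi((u,r)\dirsum 0)=(u,r)$, so it is the fine grading of $M(A)$, and likewise for $M(B)$; since $K[M\dirsum N]\iso K[M(A)]\otimes_K K[M(B)]$ and $\phi$ adds degrees, $H_{M\dirsum N}=\HH_{M(A)}\HH_{M(B)}$. As $H_L=\HH_{M(A\cup B)}$, this yields (2). I expect no obstacle beyond the primitivity check; the one thing to watch is that the fine grading genuinely qualifies as a positive multigrading with grading group $\ZZ^{m+1}$, so that Corollary \ref{dirsum} can be quoted verbatim.
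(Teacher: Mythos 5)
Your proposal is correct and follows the paper's own route: both reduce to Corollary \ref{dirsum} after checking $\rank M(A\cup B)=\rank M(A)+\rank M(B)-1$, with the only non-routine point being the unimodularity of $(kp_0,k)$, which you and the paper both derive from the minimality of $k$ (coprimality of the entries). Your write-up merely spells out more of the routine hypothesis checks than the paper does.
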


\begin{proof}
Since $\rank M(A\cup B)=\rank M(A)+\rank M(B)-1$ under our
hypotheses, we are in the situation of Corollary \ref{dirsum},
except that the unimodularity of $x-y$ needs a different
argument: it holds since $(kp_0,k)$ has coprime entries. (Note
that we have \emph{not} defined $k$ by the condition that
$kp_0\in M(A)\cap M(B)$.)
\end{proof}

\section{Normality}

Let $P\subset \RR^m$ be a rational polytope. Then the
(ordinary) \emph{Ehrhart function} is given by
$$
E(P,k)=\# (kP\cap\ZZ^d)
$$
and the corresponding generating function $E_P\sum_{k=0}^\infty
E(P,k)T^k$ is the \emph{Ehrhart series}. In order to interpret
the Ehrhart series as a Hilbert series one forms the monoid
$$
\cE(P)=\{(x,k): x\in kP\cap\ZZ^m\}\subset \ZZ^{m+1}.
$$
By Gordan's lemma $\cE(P)$ is an affine monoid, and the Ehrhart
series of $P$ is just the standard Hilbert series of $\cE(P)$.
We define the \emph{multigraded} or \emph{fine Ehrhart series}
(or lattice point generating function) of $P$ by
$$
\EE_{P}=\HH_{\cE(P)}.
$$

It is tempting to interpret the results in Section \ref{PRIME}
as statements about Ehrhart series. Such an interpretation is
indeed possible and will be given below, but it requires
further hypotheses. Let us consider the situation of Corollary
\ref{freesum} and rational polytopes $P$ and $Q$ in $\RR^m$,
such that $0\in P\cap Q$, the vector subspaces $\RR P$ and $\RR
Q$ intersect only in $0$, and
$$
(\ZZ^m\cap \RR R)=(\ZZ^m\cap\RR P)+(\ZZ^m\cap\RR Q).
$$
Then we say that $\conv(P\cup Q)$  is the (convex) \emph{free
sum} of $P$ and $Q$ (see Henk, Richter-Gebert and Ziegler
\cite{HRZ} for further information). The free sum of polytopes
can be constructed from the free join by projecting along the
line through the representatives of the origins in the free
join. Figure \ref{join2sum} illustrates this construction.
\begin{figure}[hbt]
\psset{unit=1.5cm}
\def\vertex{\pscircle*{0.05}}
$$
\begin{pspicture}(0,0)(3,2)
\def\Va{0,0}
\def\Vb{3,0}
\def\Vc{0.9,2}
\def\Vd{2.9,1.2}
 \rput(\Va){\vertex}
 \rput(\Vb){\vertex}
 \rput(\Vc){\vertex}
 \rput(\Vd){\vertex}
 \rput(2.95,0.6){\vertex}
 \rput(0.45,1){\vertex}
 \psline[linestyle=dashed](0.3,1.024)(3.1,0.576)
 \pspolygon(\Va)(\Vc)(\Vd)(\Vb)
 \psline[linestyle=dotted](\Va)(\Vd)
 \psline(\Vc)(\Vb)
 \rput(0.1,1){$x$}
 \rput(3.3,0.6){$y$}
\end{pspicture}
\qquad
\qquad
\begin{pspicture}(0,0)(3,2)
\psline(0,1)(2,1)
\psline(1,0)(1,2)
\pspolygon(0,1)(1,0)(2,1)(1,2)
\rput(1,0){\vertex}
\rput(1,2){\vertex}
\rput(0,1){\vertex}
\rput(2,1){\vertex}
\rput(1,1){\vertex}
\rput(1.4,1.1){$x=y$}
\end{pspicture}
$$
\caption{From the free join of two line segments to their free sum}\label{join2sum}
\end{figure}
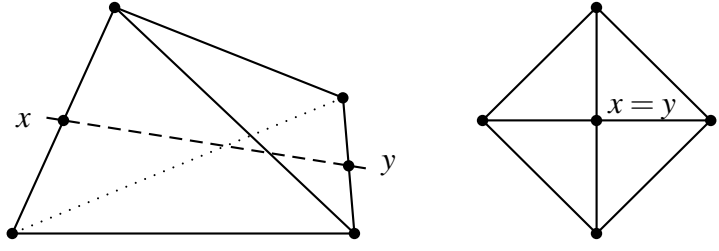

We would like to conclude that $\EE_R=(1-T_{m+1})\EE_P\EE_Q$.
This conclusion is equivalent to the fact that $\cE(R)$ arises
from $\cE(P)\dirsum\cE(Q)$ via the construction in Corollary
\ref{dirsum}. In general this is not the case, even if the
evidently necessary conditions are satisfied.

\begin{example}
Let $P\subset \RR^3$ be the lattice polytope spanned by the
points $-(e_1+e_2+e_3)$, $e_i$, $e_i+e_j$, $i,j=1,2,3$, $i\neq
j$ ($e_i$ denotes the $i$-th unit vector). For $Q$ we choose
the interval $[-1,2]\subset \RR$. Consider $P$ and $Q$ as
lattice polytopes in $\RR^4=\RR^3\dirsum \RR$. Then
$R=\conv(P\cup Q)$ is indeed the free sum of $P$ and $Q$.

Set $A=P\cap\ZZ^4$ and $B=Q\cap \ZZ^4$. That $\cE(Q)=M(B)$
holds for trivial reasons, and using Normaliz \cite{Nmz} one
checks that $\cE(P)=M(A)$. One even has $R\cap \ZZ^4 =A\cup B$.
Nevertheless $\cE(R)\neq M(A\cup B)$. This can be checked by
Normaliz directly or by inspection of the Ehrhart series:
$$
E_{P\dirsum Q}=\frac{1+3T+5T^2+4T^3+2T^4}{(1-T)^5}\neq
\frac{1+3T+4T^2+5T^3+2T^4}{(1-T)^5}
=(1-T)E_PE_Q.
$$
As we will see in Corollary \ref{freesum_normal}, this
inequality is not a surprise.
\end{example}

In the following we will have to adjoin inverse elements to the
affine monoid $M$; see \cite[p. 62]{BG}. Relative to \cite{BG}
we use the shortcut $M[-G]=M[-(G\cap M)]$ for faces $G$ of
$\cone(M)$. Extensions of type $M[-G]$ appear naturally when
localizations of monoid domains are to be considered since the
subsets $G\cap M$ of $M$ are exactly those complementary to
prime ideals. The following characterization of regular
localizations $K[M]_P$ is only implicitly  given in \cite{BG}.

\begin{lemma}\label{regular}
Let $M$ be an affine monoid of rank $d$, $K$ a field, and $P$ a
prime ideal in $K[M]$. Let $Q$ be the (automatically prime)
ideal generated by all monomials $X^x\in P$ and let $F$ be the
face of $\cone(M)$ spanned by all $y\in M$, $X^y\notin Q$. Then
the following are equivalent:
\begin{enumerate}
\item $K[M]_Q$ is a regular local ring;
\item $K[M]_P$ is a regular local ring;
\item $M[-F]$ is isomorphic to $\ZZ^{d-n}\dirsum \ZZ_+^n$
    for some $n$, $0\le n\le d$.
\end{enumerate}
\end{lemma}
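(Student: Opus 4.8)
The plan is to recognize $K[M]_Q$ as a localization of a \emph{positive} affine monoid algebra and then to read off regularity from the number of generators of that monoid, treating the torus part of the localization separately.

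Two implications are almost formal. Since $Q$ is generated by monomials lying in $P$ we have $Q\subseteq P$, hence $QK[M]_P$ is a prime of $K[M]_P$ and $K[M]_Q=(K[M]_P)_{QK[M]_P}$ by localization in stages; thus (2) implies (1), a localization of a regular local ring being regular. For (3)$\implies$(2) note that $M[-F]\iso\ZZ^{d-n}\dirsum\ZZ_+^n$ gives $K[M[-F]]\iso K[\ZZ^{d-n}][Y_1,\dots,Y_n]$, a localization of a polynomial ring and hence regular. Every monomial $X^y$ with $y\in M\cap F$ satisfies $X^y\notin Q$, and since the monomials of $K[M]$ lying in $P$ are exactly those lying in $Q$, also $X^y\notin P$; therefore $K[M[-F]]\subseteq K[M]_P$ and $K[M]_P=(K[M[-F]])_{PK[M[-F]]}$ is a localization of a regular ring (the same argument with $Q$ in place of $P$ reproves (3)$\implies$(1)).

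The substantial implication is (1)$\implies$(3). By \cite[4.D]{BG} the monomial prime $Q$ is the ideal $\mathfrak p_F$ generated by the monomials $X^z$, $z\in M\setminus F$, and $M\cap F=\{y\in M:X^y\notin Q\}$. Inverting the monomials $X^y$, $y\in M\cap F$ — precisely the monomials not in $Q$ — turns $K[M]$ into $K[M[-F]]$, and localizing further at the extension of $Q$ identifies $K[M]_Q$ with $K[M[-F]]_{\mathfrak n}$, where $\mathfrak n$ is the prime generated by the noninvertible monomials of $M[-F]$. Now $M[-F]$ decomposes as $\gp(F)\dirsum M''$, where $\gp(F)\iso\ZZ^r$ ($r=\rank\gp(F)$) is its group of units and $M''=M[-F]/\gp(F)$ is a positive affine monoid of rank $d-r$ (see \cite{BG}). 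Correspondingly $K[M[-F]]=K[\gp(F)]\otimes_K K[M'']$ and $\mathfrak n=\mathfrak m''K[M[-F]]$, where $\mathfrak m''$ is the maximal monomial ideal of $K[M'']$. The inclusion $K[M'']\hookrightarrow K[M[-F]]$ therefore induces a flat local homomorphism $K[M'']_{\mathfrak m''}\to K[M]_Q$, so regularity of $K[M]_Q$ forces regularity of $K[M'']_{\mathfrak m''}$ (see \cite{BH}).

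It remains to deduce $M''\iso\ZZ_+^{d-r}$ from regularity of $K[M'']_{\mathfrak m''}$. As $M''$ is positive, $K[M'']$ is a finitely generated positively graded $K$-algebra and $\mathfrak m''$ is its irrelevant maximal ideal, so $\dim K[M'']_{\mathfrak m''}=\dim K[M'']=\rank M''=d-r$, whereas $\dim_K\mathfrak m''/\mathfrak m''^2$ is the number of elements of the unique minimal generating set of $M''$ (the classes of $X^v$ with $v\in M''$ irreducible form a $K$-basis). Regularity equates these two numbers, so $M''$ is generated by $d-r$ elements; these generate the rank $d-r$ group $\gp(M'')$, hence are $\ZZ$-linearly independent, and therefore $M''\iso\ZZ_+^{d-r}$. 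Consequently $M[-F]\iso\gp(F)\dirsum M''\iso\ZZ^r\dirsum\ZZ_+^{d-r}$, which is (3). I expect the reduction in the previous paragraph to be the main obstacle: because $\mathfrak n$ is not maximal and $M[-F]$ is not positive, one cannot work with $K[M]_Q$ directly and must first strip off the unit group $\gp(F)$, which is exactly what the flat-descent argument does.
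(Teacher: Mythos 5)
Your reductions (2)$\Rightarrow$(1) and (3)$\Rightarrow$(2), the identification $K[M]_Q = K[M[-F]]_{\mathfrak n}$, and the final counting argument (embedding dimension of the positive part equals the number of irreducible elements of $M''$, Krull dimension equals $\rank M''$) are all correct. The gap is in the step where you strip off the unit group: the decomposition $M[-F]\iso \gp(F)\dirsum M''$ with $M''$ a \emph{positive affine} monoid is not available for an arbitrary affine monoid. Such a splitting requires $\gp(M)/\gp(M\cap F)$ to be torsionfree (equivalently, that $\gp(M\cap F)$ be a direct summand of $\gp(M)$); the statement you cite from \cite{BG} is for \emph{normal} affine monoids, and $M$ is not assumed normal here. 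For a concrete failure take $M\subset\ZZ^2$ generated by $(2,0),(-2,0),(0,1),(1,1)$ and $F$ the minimal face: then $M[-F]=M$, its unit group is $U=2\ZZ\times\{0\}$, and $\gp(M)/U=\ZZ^2/(2\ZZ\times\{0\})$ has $2$-torsion, so there is no splitting $M=U\dirsum M''$ with $M''$ affine and $K[M[-F]]$ is not of the form $K[U]\otimes_K K[M'']$. (In this example $K[M]_Q$ happens not to be regular, so the lemma is not contradicted --- but your argument asserts the splitting unconditionally, before hypothesis (1) is brought to bear.)

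This is exactly where the paper's proof does its real work: it first shows that regularity of $K[M]_Q$ forces $R=K[M[-F]]$ to be \emph{normal}, by observing that $\overline R/R$ is a finitely generated $\gp(M)$-graded $R$-module, so its associated primes are monomial, and every monomial prime of $R$ is contained in $QR$, where $R$ is regular and hence normal; thus $\overline R/R=0$. Once $R$ is known to be normal, the unit group does split off and your argument goes through (the paper instead finishes via Chouinard's theorem and \cite[4.45]{BG}, showing that all monomial height one primes of $R$ are principal). So your proof becomes correct --- and its second half is arguably more elementary than the paper's --- once you insert this normality step before invoking the decomposition.
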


\begin{proof}
 The implications (3) $\implies$ (2) $\implies$ (1) hold since
regularity is preserved under localizations.

For (1) $\implies$ (3) it is enough that $R=K[M[-F]]$ is a
regular ring; see \cite[4.45]{BG}. This follows from general
principles that hold for $\vphantom{x}^*\negthinspace$local
rings; see the discussion in \cite[p. 208]{BG}. Nevertheless a
direct argument may be welcome. The crucial observation is that
every (prime) ideal of $R$ generated by monomials is contained
in $Q'=QR$.

First we show that $R$ is normal. To this end let $\overline R$
be the normalization of $R$. It is itself an affine monoid
domain and a finitely generated $R$-module. The localization
$(\overline R/R)_{Q'}$ vanishes since $R_{Q'}$ is regular and
thus normal. But then $\overline R/R$ vanishes since its
support would have to contain a monomial prime ideal if it were
empty.

By \cite[4.45]{BG} factoriality of $R$ is sufficient for (3),
and it holds if all monomial height~$1$ prime ideals $P'$ are
principal (Chouinard's theorem \cite[4.56]{BG}). But this
follows by the same argument that shows normality: a monomial
generating the extension of $P'$ to the factorial ring $R_{Q'}$
must generate $P'$ itself.
\end{proof}

The key to results about Ehrhart series is the preservation of
normality in the situation of Theorem \ref{prime}. As we will
see, normality depends on the height of monoid elements over
facets: every $x\in M$ has a well-defined (lattice)
\emph{height} over a facet $F$ of $\cone(M)$, we denote it by
$\hht_F(x)$. It is the number of hyperplanes between $F$ and
$x$ parallel to $F$ that pass through lattice points and do not
contain $F$; so $\hht_F(x)=0$ if and only if $x\in F$.

\begin{theorem}\label{normal}
Let $M$ be a normal affine monoid of rank $d$, and Suppose that
$x,y\in M$ satisfy conditions (2)(a) and (b) of Theorem
\ref{prime}. Then the following are equivalent:
\begin{enumerate}
\item $K[M]/(X^x-X^y)$ is normal.
\item If $G$ is a subfacet of $\cone(M)$ such that
    $x,y\notin G$, then $M[-G]\iso \ZZ^{d-2}\dirsum
    \ZZ_+^2$, and $x$ or $y$ has height $1$ over one of the
    exactly two facets $F',F''$ containing $G$.
\end{enumerate}
\end{theorem}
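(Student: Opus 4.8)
The plan is to reduce the normality of $R=K[M]/(X^x-X^y)$ to a local question at the monomial primes of $R$, using Serre's criterion $(R_1)+(S_2)$. By Remark \ref{facets}(a), the hypotheses of Theorem \ref{prime} guarantee that $R=K[M']$ is itself an affine monoid domain, where $M'$ is the image of $M$ in $\gp(M)/\ZZ(x-y)$, and $K[M']$ is Cohen--Macaulay precisely when... well, $(S_2)$ is not automatic here, but the point is that normality of $K[M']$ is equivalent to $K[M']$ being regular in codimension $1$ \emph{and} satisfying $(S_2)$. Since $M$ is normal, $K[M]$ is Cohen--Macaulay by Hochster's theorem, and dividing by the regular element $X^x-X^y$ keeps $K[M']$ Cohen--Macaulay; hence $(S_2)$ holds automatically, and normality of $K[M']$ is \emph{equivalent} to the condition that $K[M']_{\mathfrak p}$ is a regular local ring for every height $1$ prime $\mathfrak p$. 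Because $K[M']$ is Cohen--Macaulay (in particular $(S_2)$), the height $1$ primes that could fail to be regular are exactly the monomial ones, i.e. those of the form described in Remark \ref{facets}(b): the prime spanned by $X^z$, $z\notin F'$, for a facet $F'$ of $\cone(M')$.

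Next I would translate facets of $\cone(M')$ back to $\cone(M)$. The projection $\pi:\gp(M)\to\gp(M)/\ZZ(x-y)=\gp(M')$ sends $\cone(M)$ onto $\cone(M')$, lowering dimension by one, and a facet $F'$ of $\cone(M')$ pulls back to a subfacet $G$ of $\cone(M)$ (a face of codimension $2$) together with the two facets $F',F''$ of $\cone(M)$ meeting in $G$ — this is exactly the geometry already illustrated in Figure \ref{succ} and Figure \ref{join2sum}, where the projection is along the line through the identified points $x=y$. The condition $x,y\notin G$ in statement (2) of the theorem is precisely the requirement that the facet of $\cone(M')$ obtained from $G$ is a genuine facet, i.e. that $\pi(G)$ stays of codimension $1$; if $x$ or $y$ lay in $G$ then $\pi(G)$ would not be a facet of $\cone(M')$ and there would be nothing to check. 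So the height $1$ monomial primes of $K[M']$ are in bijection with the subfacets $G$ of $\cone(M)$ with $x,y\notin G$.

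Then I would apply Lemma \ref{regular} to each such prime: $K[M']_{\mathfrak p}$ is regular if and only if $M'[-\pi(G)]\iso\ZZ^{d-2}\dirsum\ZZ_+^1$, i.e. $M'$ localized at the face $\pi(G)$ is a "polynomial-times-Laurent" monoid of the right rank ($\rank M'=d-1$). The final step is to see that this localized monoid $M'[-\pi(G)]$ is itself a quotient of $M[-G]$ by the relation $x\sim y$, and to match it up with the statement. Here $M[-G]$ has rank $d$ and $\pi$ restricts to a rank-dropping surjection $M[-G]\to M'[-\pi(G)]$; requiring the target to be $\ZZ^{d-2}\dirsum\ZZ_+$ forces $M[-G]$ to be $\ZZ^{d-2}\dirsum\ZZ_+^2$ (since the two facets $F',F''$ through $G$ give the two generators of the $\ZZ_+^2$ part), and it forces $x-y$ to be a unimodular combination that collapses one of the two $\ZZ_+$-rays — which is exactly the requirement that $x$ or $y$ have height $1$ over one of $F',F''$. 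The implication from (2) to regularity is a direct computation: if $M[-G]\iso\ZZ^{d-2}\dirsum\ZZ_+^2$ with coordinates so that the two $\ZZ_+$ factors are $\hht_{F'}$ and $\hht_{F''}$, and say $\hht_{F'}(x)=1$, then modding out $x-y$ turns the pair $(\hht_{F'},\hht_{F''})$ into a single free coordinate, giving $\ZZ^{d-2}\dirsum\ZZ_+$, hence regularity of $K[M']_{\mathfrak p}$.

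The main obstacle I anticipate is the bookkeeping in this last step: carefully verifying that the localization $M[-G]$ being $\ZZ^{d-2}\dirsum\ZZ_+^2$ is not just sufficient but \emph{necessary} for $K[M']_{\mathfrak p}$ to be regular — one must rule out the possibility that $M'[-\pi(G)]$ is regular (i.e. $\iso\ZZ^{d-2}\dirsum\ZZ_+$) while $M[-G]$ is more complicated than $\ZZ^{d-2}\dirsum\ZZ_+^2$. The key is that $\cone(M[-G])$ has exactly two facets not containing the maximal face, namely $F'$ and $F''$ (these are the two "walls"), and projecting along $x-y$ can only identify these two walls into one if the geometry is already the simplicial cone $\ZZ_+^2$ in the transverse directions; any extra rays would survive the projection and obstruct regularity of the quotient. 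Making this precise — essentially that a rank-$2$ positive normal monoid whose cone has more than two extreme rays cannot map onto $\ZZ_+$ by a rank-dropping homomorphism with torsionfree kernel-quotient without itself being $\ZZ_+^2$ — together with tracking the height-$1$ condition through the coordinate change, is the technical heart of the argument, though it is essentially a two-dimensional picture once set up correctly.
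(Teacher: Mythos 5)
Your overall strategy coincides with the paper's: reduce normality of $K[M']=K[M]/(X^x-X^y)$ to $(R_1)$ via Hochster's theorem and Cohen--Macaulayness, test regularity only at the height~$1$ monomial primes, and analyze the local picture at a subfacet $G=F'\cap F''$ via Lemma \ref{regular} and the heights $\hht_{F'}(x)$, $\hht_{F''}(y)$. However, there is a genuine gap in your accounting of which primes must be tested. You assert that the height~$1$ monomial primes of $K[M']$ are in bijection with the subfacets $G$ of $\cone(M)$ with $x,y\notin G$. That is false in general: if $Q\subset K[M]$ is the (height~$2$) preimage of such a prime $P$, then $Q$ contains $X^x-X^y$ and hence contains either both of $X^x,X^y$ or neither. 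In the second case the monomial part $Q'$ of $Q$ corresponds to a \emph{facet} $F$ of $\cone(M)$ containing both $x$ and $y$, and such facets do occur (condition (2)(a) of Theorem \ref{prime} only forces every facet to contain at least one of $x,y$, not to exclude containing both; a facet $F\ni x,y$ projects onto a genuine facet of $\cone(M')$ because $x-y$ lies in its linear span). Your argument never verifies regularity of $K[M']_P$ at these primes, and statement (2) of the theorem says nothing about them, so you must show they are \emph{automatically} harmless. The paper does this (its ``case (i)''): normality of $M$ gives $M[-F]\iso\ZZ^{d-1}\dirsum\ZZ_+$ with $x,y$ in the unit group, and unimodularity of $x-y$ (condition (2)(b)) makes $K[M[-F]]/(X^x-X^y)$ regular as in Remark \ref{facets}(c). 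Without this step the implication (2) $\implies$ (1) is not proved.

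A secondary point: for (1) $\implies$ (2) you flag as the ``technical heart'' the necessity of $M[-G]\iso\ZZ^{d-2}\dirsum\ZZ_+^2$ and propose a geometric argument about extra rays surviving the projection. There is a much shorter route, which the paper takes: if $K[M']_P=K[M]_Q/(X^x-X^y)$ is regular, then $K[M]_Q$ is regular, since the Krull dimension goes up by $1$ while the minimal number of generators of the maximal ideal goes up by at most $1$; Lemma \ref{regular} then yields the structure of $M[-G]$ directly, and the height condition follows from $X^x-X^y\in Q_Q\setminus(Q_Q)^2$. Your sketch of the sufficiency computation at a subfacet (collapsing the two $\ZZ_+$ coordinates when one height equals $1$) matches the paper's explicit computation $X^x-X^y=\mu U^{\hht_{F'}(x)}-\nu V^{\hht_{F''}(y)}$ and is fine in substance.
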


\begin{proof}
As for Theorem \ref{prime}, we start with the implication (2)
$\implies$ (1). By Hochster's theorem $K[M]$ is Cohen-Macaulay
\cite[6.10]{BG}, and thus Theorem \ref{prime} implies that
$K[M]/(X^x-X^y)$ is Cohen-Macaulay. Moreover,
$K[M]/(X^x-X^y)\iso K[M']$ where $M'$ is the image of $M$ in
$\gp(M)/\ZZ(x-y)$.

It is enough to show that $K[M']$ satisfies Serre's condition
$(R_1)$ since $(S_2)$ follows from Cohen-Macaulayness (see
\cite[4.F]{BG} for Serre's conditions and normality). Sice
$K[M']$ is a monoid domain, it is enough to check that the
localizations with respect to monomial prime ideals of height
$1$ are regular \cite[Exerc.\ 4.16]{BG}. Let $P$ be such a
prime ideal in $K[M']$. The preimage $Q$ in $K[M]$ has height
$2$ and contains $X^x-X^y$. There are two cases to distinguish:
(i) $X^x, X^y\notin Q$ and (ii) $X^x, X^y\in Q$. In fact, $Q$
contains either both monomials or none.

Somewhat surprisingly, case (i) does not imply any other
condition on $x$ and $y$ than those occurring already in
Theorem \ref{prime}, which are satisfied by hypothesis. Let
$Q'$ be the ideal generated by all monomials in $Q$. We have
$0\neq Q'\neq Q$ since $Q'$ contains monomials, but
$X^x,X^y\notin Q$. Therefore all monomials outside the facet
$F$ of $\cone(M)$ corresponding to $Q'$ are inverted in the
passage to $K[M]_Q$. Since $M$ is normal, $M[-F]\iso
\ZZ^{d-1}\dirsum \ZZ_+$, and $x$ and $y$ belong to $\ZZ^{d-1}$
because they are not in $Q'$. Since $x-y$ is a basis element in
$\gp(M)$, it is a basis element of the subgroup $\ZZ^{d-1}$,
and $K[\ZZ^{d-1}\dirsum\ZZ_+]/(X^x-X^y)$ is a regular´ring (see
Remark \ref{facets}(c)). Its localization $K[M']_P$ is
therefore also regular.

Now we turn to case (ii). We write the subfacet $G$ of
$\cone(M)$ corresponding to $Q$ as the intersection of facets
$F'$ and $F''$. Let $Q'$ and $Q''$ be the corresponding height
$1$ prime ideals. Since $X^x$ and $X^y$ cannot occur together
in $Q'$ or $Q''$, one of them, say $X^x$, lies in $Q'$ and
$X^y$ lies in $Q''$.  Since $M[-(F'\cap F'')]\iso
\ZZ^{d-2}\dirsum \ZZ_+^2$, the localization $K[M]_Q$ is a
regular local ring. Choosing bases in the summands, we write
$K[M[-(F'\cap F'')]]= K[Z_1^{\pm1},\dots,Z_{d-2}^{\pm1},U,V]$.
In this notation
$$
X^x-X^y=\mu U^{\hht_{F'}(x)}-\nu V^{\hht_{F''}(y)},\qquad
\mu,\nu\text{ monomials in } K[Z_1^{\pm1},\dots,Z_{d-2}^{\pm1}].
$$
The full localization $K[M]_Q$ is reached if we invert all
elements in $K[Z_1^{\pm1},\dots,Z_{d-2}^{\pm1},U,V]$ outside
the prime ideal generated by $U$ and $V$. The residue class
ring modulo $X^x-X^y$ is regular if (and only if) $X^x-X^y\in
Q_Q\setminus (Q_Q)^2$, and this is equivalent to $\hht_F(x)\le
1$ or $\hht_G(y)\le 1$.

For the converse implication (1) $\implies$ (2)one has to
reverse the arguments just used in the case (ii). First, the
regularity of $K[M']_P=K[M]_Q/(X^x-X^y)$ implies the regularity
of $K[M]_Q$ since the Krull dimension goes up by $1$ and the
number of generators of the maximal ideal by at most $1$. Now
Lemma \ref{regular} gives the structure of $M[-G]$. Moreover,
as said already, $K[M']_P=K[M]_Q/(X^x-X^y)$ is regular only if
$X^x-X^y\in Q_Q\setminus (Q_Q)^2$.
\end{proof}

We draw consequences similar to those of Theorem \ref{prime}.

\begin{corollary}\label{norm_mult}
Under the hypotheses of Corollary \ref{prime_mult} the
following are equivalent:
\begin{enumerate}
\item $K[M]/(X^{x_1}-X^{x_2},\dots,X^{x_{n-1}}-X^{x_n})$ is
    a normal domain;
\item for each face $F$ such that $\rank M-\dim F=n$ and
    $x_1,\dots,x_n\notin F$, one has the following:
    \begin{enumerate}
     \item $M[-F]\iso \ZZ^{d-n}\dirsum \ZZ_+^n$;
     \item at least $n-1$ of the $n$ nonzero numbers
         $\hht_{F_i}(x_j)$ are equal to $1$  for the
         facets $F_1,\dots,F_n$ containing $F$ and
         $j=1,\dots,n$.
     \end{enumerate}
\end{enumerate}
In particular, it is sufficient for (1) that all $n$ nonzero
heights $\hht_{F_i}(x_j)$ are equal to $1$ in the situation of
(2).
\end{corollary}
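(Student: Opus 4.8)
The plan is to reduce Corollary \ref{norm_mult} to Theorem \ref{normal} by the same inductive mechanism that Corollary \ref{prime_mult} uses to reduce to Theorem \ref{prime}. Set $P_i=(X^{x_1}-X^{x_2},\dots,X^{x_i}-X^{x_{i+1}})$ for $1\le i\le n-1$, so that $P_1$ is the principal ideal handled by Theorem \ref{normal} and $P_{n-1}$ is the ideal in (1). By Corollary \ref{prime_mult}, under the standing hypotheses each $K[M]/P_i$ is an affine monoid domain $K[M_i]$ (Remark \ref{facets}(a)), where $M_i$ is the image of $M$ in $\gp(M)/(\ZZ(x_1-x_2)+\dots+\ZZ(x_i-x_{i+1}))$; moreover each $K[M_i]$ is Cohen--Macaulay by Hochster's theorem applied after one checks normality, or directly because $K[M]$ is Cohen--Macaulay and the binomials form a regular sequence. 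So I would argue that $K[M_{n-1}]$ is normal if and only if $K[M_i]$ is normal for every $i$, and then peel off one binomial at a time: passing from $K[M_{i-1}]$ to $K[M_i]=K[M_{i-1}]/(X^{x_i}-X^{x_{i+1}})$ is exactly an instance of Theorem \ref{normal} applied to the normal monoid $M_{i-1}$ and the pair $x_i,x_{i+1}$ (their images in $M_{i-1}$).

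The heart of the matter is to translate the rank-two local condition (2) of Theorem \ref{normal}, applied successively inside the $M_i$, back into a single statement about faces $F$ of $\cone(M)$ with $\rank M-\dim F=n$. Here I would use Lemma \ref{regular}: normality of $K[M_{n-1}]$, being Cohen--Macaulay, is equivalent to condition $(R_1)$, i.e.\ to regularity of all localizations at height-one monomial primes. Unwinding the chain of quotients, a height-one monomial prime of $K[M_{n-1}]$ pulls back to a height-$n$ monomial prime $Q$ of $K[M]$, corresponding to a face $F$ of $\cone(M)$ with $\operatorname{codim}F=n$, and one must have $Q\supset(X^{x_1},\dots,X^{x_n})$ — equivalently $x_1,\dots,x_n\notin F$ — in order for $Q$ to survive as a genuine constraint (if some $x_j$ lies outside $Q$ it is inverted along the way, as in case (i) of the proof of Theorem \ref{normal}, and imposes nothing new). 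For such an $F$, the localization $K[M]_Q$ is $K[M[-F]]$ localized, and case (i)/(ii) bookkeeping from Theorem \ref{normal}, iterated, forces $M[-F]\iso\ZZ^{d-n}\dirsum\ZZ_+^n$ (this is (2)(a)) with the $n$ facets $F_1,\dots,F_n$ through $F$ corresponding to the $n$ coordinate hyperplanes. In the coordinates $K[M[-F]]=K[Z_1^{\pm1},\dots,Z_{d-n}^{\pm1},U_1,\dots,U_n]$, each binomial $X^{x_j}-X^{x_{j+1}}$ becomes $\mu_j\prod_k U_k^{\hht_{F_k}(x_j)}-\nu_j\prod_k U_k^{\hht_{F_k}(x_{j+1})}$, and the quotient by all $n-1$ of them is regular (after localizing at $(U_1,\dots,U_n)$) precisely when the $n-1$ binomials lie in the maximal ideal but are part of a minimal generating set — which, after eliminating the $Z$'s, is a Jacobian/linear-independence condition on the exponent vectors that unwinds to: at least $n-1$ of the $n$ nonzero numbers among the $\hht_{F_i}(x_j)$ equal $1$. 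The "in particular" clause is then immediate, since all nonzero heights being $1$ trivially satisfies (2)(b).

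The main obstacle I anticipate is the combinatorial/linear-algebra step in the last paragraph: verifying that, for the quotient of $K[Z^{\pm}][U_1,\dots,U_n]$ localized at $(U_1,\dots,U_n)$ by the $n-1$ binomials to be regular, the correct criterion on the height matrix $\big(\hht_{F_i}(x_j)\big)$ is exactly "$\ge n-1$ of the $n$ nonzero entries equal $1$," and not something more permissive. Concretely, writing $a_j$ for the $j$-th column (the height vector of $x_j$) and $b_j=a_{j+1}-a_j\in\ZZ^n$, regularity amounts to the images of $X^{x_j}-X^{x_{j+1}}$ being linearly independent modulo $(U_1,\dots,U_n)^2$ after inverting the $Z$'s, which requires each $b_j$ to be "visible" at the linear level — I expect this to reduce to showing that one can choose a basis of $\ZZ^n$ in which the relevant monomials are indeterminates iff the stated height condition holds, a manipulation in the spirit of Remark \ref{facets}(c) but now tracking $n$ facets at once. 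A secondary but routine point is justifying that normality of the final quotient implies normality of every intermediate $K[M_i]$; this follows because $K[M_i]$ is a further quotient of $K[M]$ by a subset of a regular sequence and the $(R_1)+(S_2)$ criterion descends, the $S_2$ part coming from Cohen--Macaulayness throughout and the $R_1$ part from the fact that the height-one primes of $K[M_i]$ are dominated by those of $K[M_{n-1}]$. Everything else is bookkeeping with Lemma \ref{regular} and the already-proved Theorems \ref{prime} and \ref{normal}.
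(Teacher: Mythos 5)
Your second paragraph is, in substance, the paper's argument: one verifies $(R_1)$ for the final quotient directly at its height-one monomial primes. Their preimages $Q$ in $K[M]$ have height $n$; if $Q$ contains none of the $X^{x_j}$ (it contains all or none, since it contains the consecutive differences and is prime), the localization is regular exactly as in case (i) of Theorem \ref{normal}; otherwise the face $F$ attached to the monomial part of $Q$ omits all $x_j$ and has codimension exactly $n$ (each facet omits at most one $x_j$ by the regular-sequence condition), regularity of $K[M]_Q$ modulo the length-$(n-1)$ regular sequence forces regularity of $K[M]_Q$ and hence (2)(a) via Lemma \ref{regular}, and the quotient is regular iff the $n-1$ binomials are independent modulo $\mathfrak{m}^2$. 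The combinatorial step you flag as the main obstacle does come out right, and you should carry it out since it is the actual content: the regular-sequence condition forces a matching, i.e.\ $X^{x_j}=\mu_j U_{\sigma(j)}^{h_j}$ for a permutation $\sigma$ and $h_j$ the unique nonzero height of $x_j$, with $\mu_j$ a unit. In $\mathfrak{m}/\mathfrak{m}^2$ the $j$-th binomial has image $\mu_j[U_{\sigma(j)}]-\mu_{j+1}[U_{\sigma(j+1)}]$ with each term deleted when the corresponding $h$ is $\ge 2$; if two indices $j<j'$ have $h_j,h_{j'}\ge 2$, the images of binomials $j,\dots,j'-1$ sum (telescope) to $\overline{X^{x_j}}-\overline{X^{x_{j'}}}=0$, whereas if at most one $h_j$ exceeds $1$ the $n-1$ images are visibly independent since $\sigma$ is injective. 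This is exactly the criterion "at least $n-1$ of the $n$ nonzero heights equal $1$."

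Two genuine gaps remain. First, the inductive peeling of your first paragraph does not work as stated: applying Theorem \ref{normal} to $(x_1,x_2)$ in $M$ requires its condition (2) for \emph{all} codimension-two faces $G$ with $x_1,x_2\notin G$, including those containing some of $x_3,\dots,x_n$, about which hypothesis (2) of the corollary says nothing; and your claim that normality of $K[M_{n-1}]$ forces normality of every intermediate $K[M_i]$ is unsubstantiated --- a height-one monomial prime of $K[M_i]$ need not contain the remaining binomials, so it is not dominated by any prime of $K[M_{n-1}]$. The direct argument above makes all of this unnecessary, so drop the induction. Second, your dismissal of the ``in particular'' clause is too quick: all nonzero heights being $1$ gives (2)(b), but (2)(a) must still be established for it to be a genuine sufficient condition. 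The paper does this by observing that $M[-F]$ splits off its unit group leaving a positive normal affine monoid of rank $n$, and the standard (height) map sends this positive part onto $\ZZ_+^n$ --- surjectively because the images of $x_1,\dots,x_n$ are the unit vectors when all heights are $1$ --- hence isomorphically onto $\ZZ_+^n$. Without that step the ``in particular'' statement is not proved.
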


\begin{proof}
The equivalence of (1) and (2) follows by arguments entirely
analogous to those proving the theorem, except that the
critical localizations are now of type $M[-F]=\ZZ^{d-n}\dirsum
\ZZ_+^n$, and the regularity of the residue class ring modulo
$(X^{x_1}-X^{x_2},\dots,X^{x_{n-1}}-X^{x_n})$ is the crucial
condition.

For the last statement we observe that the normal monoid
$M[-G]$ splits into a direct sum of its unit group and a
positive (normal) affine monoid of rank $n$ \cite[2.26]{BG}.
The positive component must be isomorphic to $\ZZ_+^n$. In
fact, the standard map \cite[p. 59]{BG} sends it surjectively
and therefore isomorphically onto $\ZZ_+^n$.
\end{proof}

\begin{corollary}
Let $L$,$M$ and $N$ be normal affine monoids, $\phi:M\dirsum
N\to L$ a surjective homomorphism with $\rank L=\rank M+\rank
N-1$, and suppose that $\phi(x)=\phi(y)$ for $x\in M$, $y\in
N$, $x\neq 0$ or $y\neq 0$. Then the following are equivalent:
\begin{enumerate}
\item $K[L]=K[M\dirsum N]/(X^x-X^y)$ and $K[L]$ is normal,
\item $\hht_F(x)\le 1$ for all facets $F$ of $\cone(M)$ or
    $\hht_G(y)\le 1$ for all facets $G$ of $\cone(N)$.
\end{enumerate}
\end{corollary}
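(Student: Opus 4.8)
The plan is to apply Theorem~\ref{normal} to the normal affine monoid $P=M\dirsum N$ of rank $d=\rank M+\rank N$, with the elements $\xi=(x,0)$ and $\eta=(0,y)$ of $P$; here $\phi(\xi)=\phi(\eta)$ and $\rank L=\rank P-1$, so $\phi$ fits the setup of Theorem~\ref{prime} for $\xi,\eta$. First I would check that conditions (2)(a) and (2)(b) of Theorem~\ref{prime} are available from either side of the equivalence (we may assume $x$ and $y$ noninvertible, as in all the applications). Condition (2)(a) is automatic for a direct sum: $K[P]=K[M]\otimes_K K[N]$ is a domain, $X^\xi=X^x\otimes1$ is a nonzerodivisor, and modulo $X^\xi$ the ring is $(K[M]/(X^x))\otimes_K K[N]$, on which multiplication by $X^\eta=1\otimes X^y$ is injective because multiplication by $X^y$ on the domain $K[N]$ is and tensoring over $K$ is exact. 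For (2)(b): if (1) holds it is the implication (3)$\implies$(2)(b) of Theorem~\ref{prime}; if (2) holds, note that any $k$ with $\xi-\eta=(x,-y)=k\cdot w$, $w\in\gp(P)$, divides $x$ in $\gp(M)$, so $x=ka$ with $a\in\gp(M)\cap\cone(M)=M$ by normality, and $a$ is noninvertible since $x$ is, whence $\hht_F(x)=k\,\hht_F(a)\ge k$ for some facet $F$ of $\cone(M)$ and likewise $\hht_G(y)\ge k$ for some facet $G$ of $\cone(N)$; whichever disjunct of (2) holds then forces $k=1$, so $\xi-\eta$ is unimodular. Granting (2)(a),(b), Theorem~\ref{prime} turns (1) into ``$K[P]/(X^\xi-X^\eta)$ is normal'' and Theorem~\ref{normal} turns the latter into its condition~(2) for the monoid $P$; so it remains to translate that condition.

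The heart of the proof is this translation, and it rests on the product structure $\cone(P)=\cone(M)\times\cone(N)$. The facets of $\cone(P)$ are the products $F\times\cone(N)$ ($F$ a facet of $\cone(M)$) and $\cone(M)\times G$ ($G$ a facet of $\cone(N)$), and its codimension-$2$ faces are the products $F\times G$ of facets together with $G_0\times\cone(N)$ and $\cone(M)\times G_0$ for subfacets $G_0$ of the respective factors. Since every face of a cone contains $0$, the subfacet $G_0\times\cone(N)$ always contains $\eta=(0,y)$ and $\cone(M)\times G_0$ always contains $\xi=(x,0)$; hence the subfacets $H$ of $\cone(P)$ with $\xi,\eta\notin H$ are exactly the $F\times G$ with $x\notin F$ and $y\notin G$. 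For such an $H=F\times G$ the two facets containing it are $F\times\cone(N)$ and $\cone(M)\times G$, and by normality each factor localization splits off its unit group plus a positive normal rank-$1$ monoid, i.e.\ $M[-F]\iso\ZZ^{\rank M-1}\dirsum\ZZ_+$ and $N[-G]\iso\ZZ^{\rank N-1}\dirsum\ZZ_+$ (cf.\ the end of the proof of Corollary~\ref{norm_mult}); therefore $P[-H]=M[-F]\dirsum N[-G]\iso\ZZ^{d-2}\dirsum\ZZ_+^2$, so the structural clause of Theorem~\ref{normal}(2) holds automatically. Finally, the linear form computing $\hht_{F\times\cone(N)}$ factors through the first summand of $\gp(P)$, giving $\hht_{F\times\cone(N)}(\xi)=\hht_F(x)$ and $\hht_{F\times\cone(N)}(\eta)=\hht_F(0)=0$, and symmetrically $\hht_{\cone(M)\times G}(\xi)=0$ and $\hht_{\cone(M)\times G}(\eta)=\hht_G(y)$. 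So the height clause of Theorem~\ref{normal}(2) for $H=F\times G$ says precisely: $\hht_F(x)=1$ or $\hht_G(y)=1$.

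Thus Theorem~\ref{normal}(2) for $P$ reads: $\hht_F(x)=1$ or $\hht_G(y)=1$ for every facet $F$ of $\cone(M)$ with $x\notin F$ and every facet $G$ of $\cone(N)$ with $y\notin G$. Because ``$\hht_F(x)\le1$'' together with ``$x\notin F$'' (that is, $\hht_F(x)\ge1$) amounts to $\hht_F(x)=1$, this is equivalent to condition~(2) of the corollary: $\hht_F(x)\le1$ for all facets $F$ of $\cone(M)$, or $\hht_G(y)\le1$ for all facets $G$ of $\cone(N)$. Indeed either disjunct clearly implies the displayed statement, and if both disjuncts failed there would be a pair $(F,G)$ with $\hht_F(x)\ge2$ and $\hht_G(y)\ge2$ contradicting it. This completes the equivalence (1)$\iff$(2). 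The delicate point is really the first paragraph --- confirming that the hypotheses of Theorems~\ref{prime} and~\ref{normal} are at hand on both sides, in particular deriving (2)(b) from condition~(2) via normality; the geometric translation that follows is then routine, its one genuine observation being that the only subfacets of $\cone(P)$ relevant to Theorem~\ref{normal}(2) are the mixed products $F\times G$, over which the structural condition comes for free and the height condition separates into the two factors.
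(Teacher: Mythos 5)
Your proof is correct and follows the same route as the paper's: apply Theorem \ref{normal} to $M\oplus N$, observe that the only critical subfacets are the products $F\times G$ of a facet of $\cone(M)$ not containing $x$ with a facet of $\cone(N)$ not containing $y$, and distribute the universal quantifier over the disjunction (``all such pairs must be considered''). The paper's own proof is only two sentences long, so your additional verifications (the tensor-product argument for condition (2)(a), the divisibility-plus-normality argument for unimodularity of $x-y$, and the splitting $P[-H]=M[-F]\oplus N[-G]$) correctly supply details it leaves implicit.
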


\begin{proof}
If (2) is satisfied, then $x-y$ is unimodular in $\gp(M\dirsum
N)$, and we need no longer think about the isomorphism
$K[L]=K[M\dirsum N]/(X^x-X^y)$.

In checking the equivalence of (1) and (2) in regard to
normality, one notes that the critical subfacets of
$\cone(M\dirsum N)$ are exactly the intersections $F'\cap F''$
where $F'$ is the extension of a facet of $\cone(M)$ not
containing $x$ and $F''$ extends a facet of $\cone(N)$ not
containing $y$, and \emph{all} such pairs $(F',F'')$ must be
considered.
\end{proof}

We want to state consequences for Ehrhart series similar to
Corollaries \ref{freesum} and \ref{rational_HH}. In the
situation of the free sum (and similarly in that analogous to
Corollary \ref{rational_HH}) one always has a homomorphism
$\phi:\cE(P)\dirsum \cE(Q)\to \cE(R)$ where $R=\conv(P\cup Q)$.
Set $L=\Im \phi$. By Corollary \ref{dirsum} we have
$\HH_L=(1-T_{m+1})\EE_P\EE_Q$. But $L$ and $\cE(R)$ generate
the same cone in $\RR^{m+1}$ (since $R=\conv(P\cap Q)$) and the
same subgroup of $\ZZ^{m+1}$ (since $(\ZZ^m\cap \RR
R)=(\ZZ^m\cap\RR P)+(\ZZ^m\cap\RR Q)$), and $\cE(R)$ is normal.
Therefore $\cE(R)$ is the normalization of $L$, and the
following statements are equivalent: (i) $L$ is normal, (ii)
$L=\cE(R)$, and (iii) $\HH_L=\EE_R$.

After these preparations we obtain \cite[Theorem 1.3]{Be3}. It
generalizes \cite[Corollary 1]{Bra} properly (see \cite[Remark
3.5]{Be3}).

\begin{corollary}\label{freesum_normal}
Let $R\subset \RR^m$ be a rational polytope that is the free
sum of the rational polytopes $P$ and $Q$, both containing $0$.
Then the following are equivalent:
\begin{enumerate}
\item At least in one of $P$ or $Q$ the origin has height
    $\le 1$ over all facets;
\item $\EE_R=(1-T_{m+1})\EE_P\EE_Q$.
\end{enumerate}
\end{corollary}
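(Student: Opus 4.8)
The plan is to identify this Corollary with the special case of the preceding corollary in which $M=\cE(P)$, $N=\cE(Q)$, and $L=\Im\phi$ for the canonical surjection $\phi:\cE(P)\dirsum\cE(Q)\to\cE(R)$. By the discussion immediately before the statement, $\cE(R)$ is the normalization of $L$, and the three conditions ``$L$ is normal'', ``$L=\cE(R)$'', and ``$\HH_L=\EE_R$'' are equivalent; moreover $\HH_L=(1-T_{m+1})\EE_P\EE_Q$ by Corollary \ref{dirsum}. So (2) of the present statement is just the assertion $\HH_L=\EE_R$, i.e.\ the normality of $L=\cE(P)\dirsum\cE(Q)/(X^x-X^y)$ with $x=(0,1)\dirsum 0$ and $y=0\dirsum(0,1)$ in the notation of Corollary \ref{freesum}. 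Thus I would simply invoke the previous corollary with this $x$ and $y$.

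The remaining point is to translate condition (2) of the previous corollary into condition (1) here. First I would note that the height of $x=(0,1)\dirsum 0$ over a facet $F$ of $\cone(\cE(P))$ that comes from a facet-defining inequality of $P$ is exactly the height of the origin $0\in P$ over the corresponding facet of $P$: writing a facet inequality of $P$ as $\langle a,v\rangle\le b$ with primitive integral $a$ and $b\in\ZZ$, the induced facet of $\cone(\cE(P))\subset\RR^{m+1}$ is cut out by $\langle a,v\rangle\le b k$ (coordinates $(v,k)$), and $\hht_F(x)=b$, which is precisely $\hht(0)$ over that facet of $P$, since $0\in P$ forces $b\ge 0$. (The facet $k\ge 0$ of $\cone(\cE(P))$ contains $x$, so it imposes nothing.) Hence ``$\hht_F(x)\le 1$ for all facets $F$ of $\cone(\cE(P))$'' is equivalent to ``the origin has height $\le1$ over all facets of $P$'', and symmetrically for $Q$; this is exactly the dichotomy in condition (1) of the previous corollary versus (1) here.

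I expect the main obstacle to be purely bookkeeping: making the height translation precise and checking that the hypotheses of the previous corollary are met. Since $\cE(P)$ and $\cE(Q)$ are Ehrhart monoids they are normal, so ``$L$,$M$,$N$ normal'' holds; $\rank L=\rank\cE(R)=\dim R+1=(\dim P+1)+(\dim Q+1)-1=\rank\cE(P)+\rank\cE(Q)-1$ because $\RR P\cap\RR Q=0$; and $\phi(x)=\phi(y)=(0,1)\neq 0$ with $x\neq 0$. So all hypotheses are in force, and condition (1) of the previous corollary reads verbatim as ``$\hht_F(x)\le1$ for all facets $F$ of $\cone(\cE(P))$, or $\hht_G(y)\le1$ for all facets $G$ of $\cone(\cE(Q))$'', which by the translation above is condition (1) of the present corollary. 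Finally, since (1) of the previous corollary packages both normality and the isomorphism, and we have already seen that normality of $L$ is equivalent to $\HH_L=\EE_R=(1-T_{m+1})\EE_P\EE_Q$, we obtain the equivalence of (1) and (2) as stated.
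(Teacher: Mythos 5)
Your proposal is correct and follows exactly the route the paper intends: the corollary is stated without a separate proof because it is meant to follow from the preparatory discussion (where $L=\Im\phi$, $\cE(R)$ is the normalization of $L$, and $\HH_L=(1-T_{m+1})\EE_P\EE_Q$ by Corollary \ref{dirsum}) combined with the preceding corollary on normal free-sum quotients, which is precisely what you invoke. Your explicit translation of $\hht_F\bigl((0,1)\bigr)$ over facets of $\cone(\cE(P))$ into the height of the origin over facets of $P$ is a correct filling-in of a step the paper leaves implicit.
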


In the same way, as Corollary \ref{rational_HH} generalizes
Corollary \ref{freesum}, we can generalize Corollary
\ref{freesum_normal} and thus generalize \cite[Corollary
 5.8]{Be3}, but we must also generalize the condition $(\ZZ^m\cap
\RR R)=(\ZZ^m\cap\RR P)\dirsum (\ZZ^m\cap\RR Q)$. To this end
we say that a subset $A$ of $\ZZ^m$ is the \emph{$\ZZ$-affine
hull} of $B\subset \ZZ^m$ if
$$
A=\bigl\{a_1x_1+\dots+a_nx_n: n\ge 1, x_1,\dots,x_n\in B,\ a_1,\dots,a_n\in \ZZ,\
a_1+\dots+a_n=1\bigr\}.
$$
Note that the $\ZZ$-affine hull is the subgroup generated by
$B$ if $0\in B$.

\begin{corollary}\label{rational_EE}
Let $P,Q\subset \RR^m$ be rational polytopes such that
$\aff(P)$ and $\aff(Q)$ meet in a single point $p_0\in P\cap
Q$. Set $R=\conv(P\cup Q)$ and suppose that $\aff(R)\cap\ZZ^m$
is the $\ZZ$-affine hull of $(\aff(P)\cup\aff(Q))\cap\ZZ^m$.
Furthermore let $k$ be the smallest positive integer such that
$kp_0\in \ZZ^m$. Then the following are equivalent:
\begin{enumerate}
\item At least in one of $\cE(P)$ or $\cE(Q)$ the point $(kp_0,k)$
    has height $\le 1$ over all facets;
\item $\EE_R=(1-T^{(kp_0,k)})\EE_P\EE_Q$.
\end{enumerate}
\end{corollary}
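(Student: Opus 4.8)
The plan is to run the argument of the discussion preceding Corollary~\ref{freesum_normal} in the more general form used for Corollary~\ref{rational_HH}, i.e.\ with $p_0$ no longer assumed integral, and then to read off statement (1) from condition (2) of Theorem~\ref{normal}.

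Set $M=\cE(P)$, $N=\cE(Q)$, $x=(kp_0,k)\dirsum 0$, $y=0\dirsum (kp_0,k)$; since $k>0$ these are noninvertible elements of the normal monoid $M\dirsum N$ and $x\neq y$. From $P,Q\subseteq R$ we get $\cE(P),\cE(Q)\subseteq\cE(R)$, hence a monoid homomorphism $\phi\colon M\dirsum N\to\cE(R)$, $a\dirsum b\mapsto a+b$, with $\phi(x)=\phi(y)=(kp_0,k)$; put $L=\Im\phi$. Because $\aff(P)$ and $\aff(Q)$ meet in a single point we have $\dim R=\dim P+\dim Q$, so $\rank L=\rank\cE(R)=\rank M+\rank N-1$; moreover $\phi$ annihilates no nonzero element (the last coordinate is additive and strictly positive on nonzero elements of $\cE(P)$ and $\cE(Q)$), and $(kp_0,k)$ has coprime entries (if $d\mid k$ and $d\mid kp_0$ then $(k/d)p_0\in\ZZ^m$, forcing $d=1$), hence is primitive in $\ZZ^{m+1}$ and therefore unimodular as an element of $\gp(L)$ --- this is the same substitute for unimodularity used in Corollary~\ref{rational_HH}. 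So Corollary~\ref{dirsum} applies, with $g=\deg\phi(x)=(kp_0,k)$ for the fine grading, and yields $K[L]\iso K[M\dirsum N]/(X^x-X^y)$ together with $\HH_L=(1-T^{(kp_0,k)})\EE_P\EE_Q$.

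I next claim $\cE(R)$ is the normalization of $L$, so that ``$L$ is normal'', ``$L=\cE(R)$'' and ``$\HH_L=\EE_R$'' are equivalent. Indeed $L$ and $\cE(R)$ span the same cone $\RR_+(R\times\{1\})$ in $\RR^{m+1}$ because $R=\conv(P\cup Q)$, and $\cE(R)$ is normal; so it remains to check $\gp(L)=\gp(\cE(R))$, and this is where the hypothesis on the $\ZZ$-affine hull is needed. Since $\cE(P)$ is the set of all lattice points of the cone it spans, $\gp(\cE(P))$ is the group of all lattice points in the linear span of that cone; in particular $(v,1)\in\gp(\cE(P))$ for each $v\in\aff(P)\cap\ZZ^m$, and likewise for $Q$ and $R$. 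Assume first $\aff(R)\cap\ZZ^m\neq\emptyset$; then the elements $(v,1)$, $v\in\aff(R)\cap\ZZ^m$, generate $\gp(\cE(R))$, and writing such a $v$ as an integral affine combination $\sum a_i x_i+\sum b_j y_j$ with $x_i\in\aff(P)\cap\ZZ^m$, $y_j\in\aff(Q)\cap\ZZ^m$ (possible by hypothesis) gives $(v,1)=\sum a_i(x_i,1)+\sum b_j(y_j,1)\in\gp(\cE(P))+\gp(\cE(Q))=\gp(L)$, so $\gp(L)=\gp(\cE(R))$. (If $\aff(R)\cap\ZZ^m=\emptyset$ the hypothesis forces $\aff(P)\cap\ZZ^m=\aff(Q)\cap\ZZ^m=\emptyset$, and one argues the same way with the lattice points of minimal denominator on each affine hull; I expect this degenerate case to be the only slightly delicate point.)

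Finally I interpret condition (2) of Theorem~\ref{normal} for $M\dirsum N$; it is applicable since $x,y$ satisfy (2)(a),(b) of Theorem~\ref{prime} --- (b) as just noted, and (a) because every facet of $\cone(M\dirsum N)$ has the form $F\times\cone(\cE(Q))$ or $\cone(\cE(P))\times G$, hence contains $y$ resp.\ $x$ (cf.\ Remark~\ref{facets}(b)). The subfacets of $\cone(M\dirsum N)$ containing neither $x$ nor $y$ are exactly the products $F\times G$ with $F$ a facet of $\cone(\cE(P))$ not containing $(kp_0,k)$ and $G$ a facet of $\cone(\cE(Q))$ not containing $(kp_0,k)$; for each such product $(M\dirsum N)[-(F\times G)]=\cE(P)[-F]\dirsum\cE(Q)[-G]\iso\ZZ^{d-2}\dirsum\ZZ_+^2$ automatically because $\cE(P)$, $\cE(Q)$ are normal, while the two facets of $\cone(M\dirsum N)$ through $F\times G$ are $F\times\cone(\cE(Q))$ and $\cone(\cE(P))\times G$, over which $x$ has heights $\hht_F(kp_0,k)$ and $0$ and $y$ has heights $0$ and $\hht_G(kp_0,k)$ (heights computed in $\cE(P)$ resp.\ $\cE(Q)$). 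Hence Theorem~\ref{normal}(2) becomes: for every such pair $(F,G)$ one has $\hht_F(kp_0,k)=1$ or $\hht_G(kp_0,k)=1$; since at least one such pair exists and all the heights occurring are positive, this holds if and only if $\hht_F(kp_0,k)\le 1$ for all facets $F$ of $\cone(\cE(P))$, or $\hht_G(kp_0,k)\le 1$ for all facets $G$ of $\cone(\cE(Q))$ --- which is statement~(1). Chaining: (1) $\iff$ $K[L]$ is normal $\iff$ $L=\cE(R)$ $\iff$ $\HH_L=\EE_R$ $\iff$ $\EE_R=(1-T^{(kp_0,k)})\EE_P\EE_Q$, which is (2). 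The only genuinely new ingredient relative to Corollary~\ref{freesum_normal} is the equality $\gp(L)=\gp(\cE(R))$ extracted from the $\ZZ$-affine hull hypothesis.
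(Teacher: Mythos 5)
Your proof is correct and follows exactly the route the paper intends (the paper offers no separate proof, only the remark that the statement follows ``in the same way'' as Corollary~\ref{freesum_normal}): you apply Corollary~\ref{dirsum} with the coprimality of the entries of $(kp_0,k)$ as the substitute for unimodularity, identify $\cE(R)$ as the normalization of $L=\Im\phi$ using the $\ZZ$-affine-hull hypothesis to get $\gp(L)=\gp(\cE(R))$, and translate normality via the height criterion of Theorem~\ref{normal} and the unnumbered corollary on direct sums. The only loose end is the degenerate case $\aff(P)\cap\ZZ^m=\aff(Q)\cap\ZZ^m=\emptyset$ (possible when $k>1$), which you flag honestly and which the paper does not address either.
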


Finally we derive \cite[Theorem 3]{BR} without using arguments
on triangulations.

\begin{corollary}
Let $M$ be an affine monoid such that $K[M]$ is Gorenstein and
let $X^w$, $w\in M$, generate the canonical module of $K[M]$.
Furthermore let $x_1,\dots,x_n\in M$ noninvertible elements
such that $w=x_1+\dots+x_n$. Then
$K[M]/(X^{x_1}-X^{x_2},\dots,X^{x_{n-1}}-X^{x_n})$ is again a
Gorenstein normal affine monoid domain and has dimension $\rank
M-(n-1)$.
\end{corollary}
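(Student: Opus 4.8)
The plan is to read the statement off Corollaries~\ref{prime_mult} and~\ref{norm_mult} together with the stability of the Gorenstein property modulo a regular sequence. Everything hinges on one consequence of the hypotheses: since $K[M]$ is a normal affine monoid domain whose canonical module is generated by $X^w$, Stanley's criterion (see \cite{BG}) gives $\hht_F(w)=1$ for every facet $F$ of $\cone(M)$. As $\hht_F$ is additive on $M$ with values in $\ZZ_{\ge 0}$, the relation $w=x_1+\dots+x_n$ forces, for each facet $F$, that \emph{exactly one} of the $x_i$, say $x_{j(F)}$, has height $1$ over $F$ while all the others lie in $F$; and $F\mapsto j(F)$ is surjective onto $\{1,\dots,n\}$, for otherwise some $x_i$ would lie on every facet, hence in the lineality space of $\cone(M)$, contradicting its noninvertibility.

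Granting this, the hypotheses of Corollary~\ref{prime_mult} follow at once. No facet omits two of the $x_i$, so --- $K[M]$ being Cohen--Macaulay --- Remark~\ref{facets}(b) shows that $X^{x_1},\dots,X^{x_n}$ is a $K[M]$-sequence. Next, applying $\hht_F$ (extended linearly to $\gp(M)$), for a facet with $j(F)=j_0$, to a relation $\sum_i a_i(x_i-x_{i+1})=mv$ with $v\in\gp(M)$, $m\ge 0$, gives $a_{j_0}-a_{j_0-1}=m\,\hht_F(v)$ for $j_0=1,\dots,n$, with the convention $a_0=a_n=0$; telescoping shows first (case $m=0$) that $x_1-x_2,\dots,x_{n-1}-x_n$ are linearly independent and then (case $m\ge 1$, using torsionfreeness of $\gp(M)$) that each $a_j$ lies in $m\ZZ$, whence $v\in U:=\sum_i\ZZ(x_i-x_{i+1})$. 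Thus $U$ is a rank $n-1$ direct summand of $\gp(M)$, and by Corollary~\ref{prime_mult} and Remark~\ref{facets}(a) the ring $K[M]/(X^{x_1}-X^{x_2},\dots,X^{x_{n-1}}-X^{x_n})$ is an affine monoid domain $K[M']$ with $M'$ the image of $M$ in $\gp(M)/U\iso\ZZ^{d-(n-1)}$; in particular it has dimension $\rank M-(n-1)$.

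For normality I verify condition (2) of Corollary~\ref{norm_mult}, in the stronger form where all $n$ nonzero heights equal $1$, for each face $F$ with $\rank M-\dim F=n$ and $x_1,\dots,x_n\notin F$. Let $F_1,\dots,F_k$, $k\ge n$, be the facets through such an $F$, so $F=\bigcap_i F_i$. The forms $\hht_{F_i}$ vanish on $\gp(F)$ and descend to the primitive facet normals $\bar\sigma_1,\dots,\bar\sigma_k$ of the positive normal monoid $\bar M$ of rank $n$ with $M[-F]=\gp(F)\dirsum\bar M$. Writing $\bar x_j$ for the image of $x_j$, one has $\bar\sigma_i(\bar x_j)=\hht_{F_i}(x_j)=\delta_{j(F_i),j}$, and every $j$ occurs as some $j(F_i)$ because $x_j\notin F$. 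Hence the $\bar x_j$ are linearly independent, and in the basis dual to them $\bar\sigma_i=\bar x_{j(F_i)}^{\ast}$; since the $\bar\sigma_i$ are pairwise distinct this forces $k=n$ and $\cone(\bar M)=\cone(\bar x_1,\dots,\bar x_n)$, and because the integer matrices with rows $\bar\sigma_i$ and columns $\bar x_j$ are mutually inverse, $\bar x_1,\dots,\bar x_n$ is even a $\ZZ$-basis of $\gp(\bar M)$. Therefore $M[-F]\iso\ZZ^{d-n}\dirsum\ZZ_+^n$ while all nonzero $\hht_{F_i}(x_j)$ equal $1$, so $K[M']$ is normal by Corollary~\ref{norm_mult}. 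I expect this step to carry the weight of the argument: the identity $\bar\sigma_i=\bar x_{j(F_i)}^{\ast}$ is precisely what excludes both a face $F$ lying on more than $n$ facets and a non-unimodular simplicial cone $\cone(\bar M)$, either of which would break normality.

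Finally, $K[M']=K[M]/(X^{x_1}-X^{x_2},\dots,X^{x_{n-1}}-X^{x_n})$ is the quotient of the Gorenstein ring $K[M]$ by a $K[M]$-sequence, hence Gorenstein (\cite{BH}). So $K[M']$ is a Gorenstein normal affine monoid domain of dimension $\rank M-(n-1)$, as asserted.
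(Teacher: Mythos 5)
Your proof is correct and follows essentially the same route as the paper: Stanley's height-$1$ characterization of $w$ together with the additivity of $\hht_F$ makes the height vectors of $x_1,\dots,x_n$ disjoint $0$-$1$ vectors, after which Corollary \ref{norm_mult} (in its ``all nonzero heights equal $1$'' form) and the stability of the Gorenstein property modulo a regular sequence finish the argument. The only difference is one of detail: you explicitly verify hypotheses (2)(a) and (2)(b) of Corollary \ref{prime_mult} via the height functions and re-derive $M[-F]\iso\ZZ^{d-n}\dirsum\ZZ_+^n$, all of which the paper compresses into the single sentence about disjoint supports and into the ``in particular'' clause of Corollary \ref{norm_mult}.
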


\begin{proof}
The point $w$ is distinguished by the fact that it has height
$1$ over each facet. Therefore ``height vectors'' defined
$x_1,\dots,x_n$ are $0$-$1$-vectors with disjoint supports, and
Corollary \ref{norm_mult} applies. It yields that the residue
class ring is a normal affine monoid domain, and the Gorenstein
property is preserved modulo regular sequences.
\end{proof}

\end{document}